\newtheorem*{conjecture}{Conjecture}
\newtheorem{prop}{Proposition}
\newtheorem{theorem}[prop]{Theorem}
\theoremstyle{definition}
\newcommand{\ds}[1]{\ensuremath{\displaystyle{#1}}}
\newcommand{\incentive}{\varphi}
\begin{document}
\title{The Inherent Randomness of Evolving Populations}
\author{Marc Harper}
\address{University of California Los Angeles}
\email{marcharper@ucla.edu} 
\date{\today}

\begin{abstract}
The entropy rates of the Wright-Fisher process, the Moran process, and generalizations are computed and used to compare these processes and their dependence on standard evolutionary parameters. Entropy rates are measures of the variation dependent on both short-run and long-run behavior, and allow the relationships between mutation, selection, and population size to be examined. Bounds for the entropy rate are given for the Moran process (independent of population size) and for the Wright-Fisher process (bounded for fixed population size). A generational Moran process is also presented for comparison to the Wright-Fisher Process. Results include analytic results and computational extensions.
\end{abstract}

\maketitle

\section{Introduction}

Populations of replicating entities are subject to a variety of selective, stochastic, and diversifying processes. These processes can act on different time scales, ranging from short-term stochastic drift in small populations to long-term selective processes that slowly lead to the fixation of one trait in a population. Many such processes have been studied extensively, including founder and bottleneck effects, selective stability, and mutation-selection balances \cite{moran1958random} \cite{kimura1985neutral} and depend significantly on certain population parameters, such as population size \cite{fogel1998instability}  \cite{ficici2000effects} \cite{nowak2004emergence} \cite{patwa2008fixation}. Accordingly, variation in population dynamics arises from both short-term and long-term effects.

Why study the entropy rate of models of evolutionary processes? Entropy rates measure the inherent randomness of a process due to both short-term and long-term dynamics. Moreover, by assigning a value to each particular process we gain the ability to not only compare processes but also a way to explore the interactions of the fundamental processes of population biology: natural selection (through the fitness landscape), genetic drift (via the population size), and diversifying processes such as mutation. We can also study the fundamental processes in evolutionary dynamics pairwise by considering several limits: to eliminate drift, we can let the population size $N \to \infty$, to eliminate mutation, we can let the mutation probability $\mu \to 0$, and to eliminate selection, we can use a uniform fitness landscape. Entropy rates reveal that there is significant long-run variation in finite population dynamics even in cases that are thought of as \emph{evolutionarily stable}, and also situations in which the ``most inherently random'' behavior occurs for relatively large populations rather than from the stochastic effects of small populations.

\subsection{The Moran Process and Generalizations}

The Moran process is a birth-death process that describes natural selection in finite populations \cite{moran1962statistical} and has many applications \cite{dingli2011stochastic} \cite{traulsen2009stochastic}. In each round of the process, an individual is chosen proportionally to fitness to reproduce and an individual is chosen at random to be replaced. The classical Moran Process was generalized to include mutation and frequency dependent fitness by Fudenberg et al \cite{fudenberg2004stochastic}. Let us consider a slight generalization to include possibly variable mutation rates that depend on the population state. For a population of size $N$, let the population be divided into two types $A$ and $b$, with the number of $A$ individuals denoted by $i$ and the number of $B$ individuals by $N-i$. A pair $(i, N-i)$ with $0 \leq i \leq N$ is a population state. Let $f_A$ and $f_B$ be the fitness of the types $A$ and $B$ respectively, possibly depending on the population state (i.e. is frequency-dependent). The Moran process has transition probabilities
\begin{align}\label{moran_process}
T_{i \to i+1} &= \frac{i f_A(i)(1 - \mu_{AB}(i)) + (N-i) f_B(i)\mu_{BA}(N-i)}{i f_A(i) + (N-i) f_B(i)} \frac{N-i}{N} \notag \\
T_{i \to i-1} &= \frac{i f_A(i)\mu_{AB}(i) + (N-i) f_B(i)(1 - \mu_{BA}(N-i))}{i f_A(i) + (N-i) f_B(i)} \frac{i}{N}, \\
T_{i \to i} &= 1 - T_{i \to i+1} - T_{i \to i-1}, \notag
\end{align}
where $\mu_{AB}$ and $\mu_{BA}$ are mutation probabilities that may depend on the state, and the fitness landscape is given by
\begin{align*}
f_A(i) &= \frac{a(i-1) + b(N-i)}{N-1} \\
f_B(i) &= \frac{ci + d(N-i-1)}{N-1}
\end{align*}
for a game matrix defined by
\[ \left( \begin{matrix}
 a & b\\
 c & d
\end{matrix} \right) \]
\noindent In accordance with \cite{claussen2005non} and \cite{fudenberg2004stochastic}, further assume that $T_{0 \to 1} = \mu_{AB}$, $T_{0 \to 0} = 1- \mu_{AB}$, $T_{N \to N-1} = \mu_{BA}$, and $T_{N \to N} = 1 - \mu_{BA}$ so that the Markov process has a stationary distribution and no absorbing states. We will consider two mutation regimes: the \emph{boundary} regime defined by $\mu_{AB}(i) = 0 = \mu_{BA}(i)$ for $i \neq 0, N$ and the \emph{uniform} regime defined by $\mu_{AB}(i) = \mu_{AB}$ and $\mu_{BA}(i) = \mu_{BA}$ for all $i$ (so that the mutation rates are constant). The uniform regime is a more realistic model of mutation whereas the boundary regime is the minimal amount of mutation required to ensure a stationary distribution for the Moran process in most of the cases we will consider. If $i f_A(i) = (N-i) f_B(i)$ for all $i \neq 0, N$, then the two regimes are equivalent. 

\subsection{The Wright-Fisher Process}

In contrast to the Moran process, which models a population in terms of individual birth-death events, the Wright-Fisher process is a generational model of evolution \cite{imhof2006evolutionary} \cite{ewens2004mathematical}. Each successive generation is formed by sampling, proportionally to fitness, the current generation. Define the Wright-Fisher Process with mutation for evolutionary games by the following transition probabilities:
\begin{align*}
 T_{i \to j} = \binom{N}{j} &\left(\frac{i f_A(i)(1 - \mu_{AB}(i)) + (N-i) f_B(i)\mu_{BA}(N-i)}{i f_A(i) + (N-i) f_B(i)}\right)^j \\ \times &\left(\frac{i f_A(i)\mu_{AB}(i) + (N-i) f_B(i)(1 - \mu_{BA}(N-i))}{i f_A(i) + (N-i) f_B(i)}\right)^{N-j}
 \label{wright_fisher_process}
\end{align*}
This is a slight generalization of the basic process as given by Imhof and Nowak \cite{imhof2006evolutionary} to include mutation, though we will not consider parameters for intensity of selection. In contrast to the Moran process, the Wright-Fisher process is not tridiagonal, rather every state is accessible from every other state, so long as the fitness landscape is non-zero. For more on both the Moran process and the Wright-Fisher process see \cite{nowak2006evolutionary}.

\subsection{Entropy Rate}

A fundamental tool in information theory, probability, and statistics is the Shannon entropy of a probability distribution \cite{shannon1949mathematical}. For a discrete probability distribution $p = (p_0, p_1, \ldots, p_n)$, the Shannon entropy (or simply entropy) is
\[ H(p) = -\sum_{i=0}^{n}{ p_i \log{p_i}},\]
where $p_i \log{p_i} = 0$ if $p_i = 0$. The meaning of the entropy of a probability distribution is often described as a measure of uncertainty or information content. The entropy rate of a stationary Markov process is an information-theoretic quantity that characterizes the inherent randomness of the process \cite{cover2006elements} \cite{strelioff2007inferring}, and plays a similar role as the Shannon entropy. To each state $i$ of a Markov process $P$ there is a probability distribution $T_i = (T_{i \to 0}, \ldots, T_{i \to n})$ for the transition probabilities out of the state. We refer to the entropies of these transition probability distributions as the transition entropies $H(T_i)$. The mean of the transition entropies taken with respect to the stationary distribution $s=(s_0, \ldots, s_n)$ of the Markov process is the entropy rate:
\begin{align*}
\label{entropy_rate}
H(P) &= \sum_{i=0}^{n}{s_i \sum_{j=0}^{n}{T_{i \to j} \log T_{i \to j}} } \\
     &= \sum_{i=0}^{n}{s_i H(T_i) }
\end{align*}

The stationary distribution is a description of the long term behavior of a Markov process, and so the entropy can be similarly interpreted as a measure of the uncertainty, inherent randomness, or information content of the long run behavior of the process. The entropy rate is affected by the likelihood that the process occupies a particular state and the entropy of the behavior of the state. In other words, the entropy rate reflects both the long term variance in population states (the stationary distribution) and the short term variance due to the entropy of the transition probabilities at the states represented significantly in the stationary distribution.

Generally for a Markov process the transition probabilities are known a priori; the stationary distribution, however, can be difficult to describe analytically, depending on the complexity of the transition probabilities. Since the maximal Shannon entropy for a discrete distribution on $n$ states is $\log n$, the theoretical maximum entropy rate for a Markov process is also $\log n$. For a tridiagonal process (e.g. the Moran process), the maximum entropy rate is $\log 3$. For the Wright-Fisher process, the theoretical maximum entropy is $\log{(N+1)}$, where $N$ is the population size, because there are $N+1$ states (and so typically $N+1$ non-zero values in each transition distribution).

Stationary distributions for the Moran process and some recently-studied generalizations are given by Claussen and Traulsen in \cite{claussen2005non} (see also \cite{antal2009strategy}), the computation of which we briefly discuss. The components $s_i$ of the stationary distribution satisfy $s_i T_{i \to i+1} = s_{i+1} T_{i+1 \to i}$ and
\begin{equation}
s_j = s_0 \prod_{i=0}^{j-1}{\frac{T_{i \to i+1}}{T_{i+1 \to i}}},
\label{s_j}
\end{equation}
where $s_0$ can be obtained from the normalization $\sum_i{s_i} = 1$:
\begin{equation}
s_0 = \left(1 + \sum_{j=1}^{N}{\prod_{i=0}^{j-1}{\frac{T_{i \to i+1}}{T_{i+1 \to i}}}}\right)^{-1}
\label{s_0}
\end{equation}
This particular formulation relies on the fact that the processes are tridiagonal with only transitions between neighboring states being nonzero. From a computational perspective, for any concrete values of the various parameters of these processes, the stationary distribution can be computed efficiently even for relatively large populations using a sparse matrix approach, and useful analytic forms can be given in some cases. Finally, note that nonzero mutation probabilities on the boundary states $i=0, N$ are required so that the Markov process has a unique stationary distribution. In other words, we must prevent these states from being absorbing, and we can recover the behavior of processes without mutation by letting $\mu$ tend to zero. Analytic solutions for some examples of the Moran process on evolutionary games in the boundary regime are given in \cite{claussen2005non}. See Figure \ref{figure_0} for an example of a Moran process with associated transition entropies and the stationary distribution.

\begin{figure}
    \centering
    \includegraphics[width=0.9\textwidth]{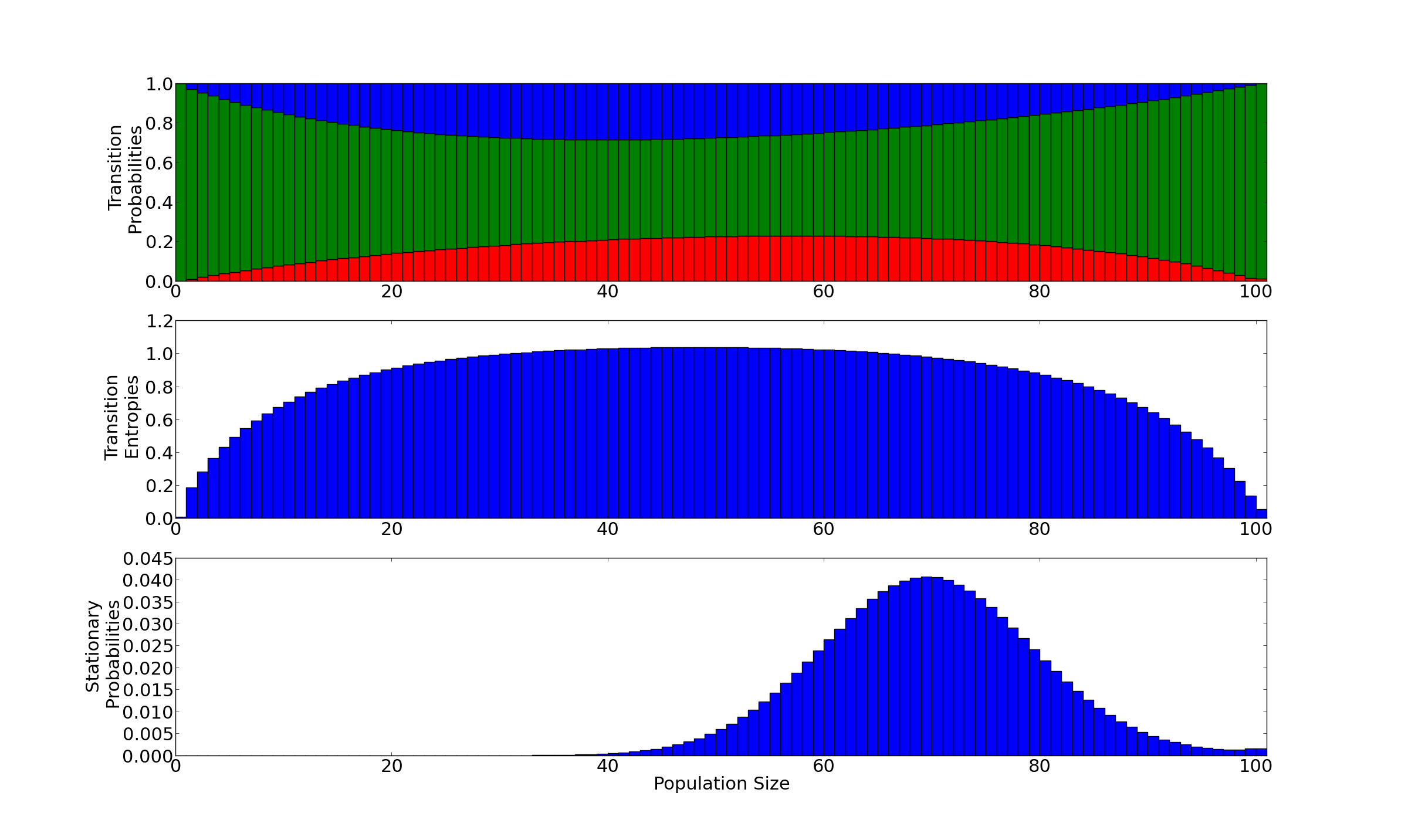}
    \caption{Top: Transition Probabilities for $N=100$, $\mu_{AB} = 0.001$, $\mu_{BA} = 0.01$, uniform mutations, and game matrix given by $a=2$, $b=4$, $c=3$, and $d=2$. Red, green, and blue correspond to $T_{i \to i-1}, T_{i \to i}, T_{i \to i+1}$ for each state $i$. Middle: Transition entropies. Bottom: stationary distribution. The entropy rate in this example is approximately $H = 0.9472$. Stationary distributions are not generally Gaussian \cite{claussen2005non}.}
    \label{figure_0}
\end{figure}

For the Moran process we will consider several fundamental examples and give a variety of analytical results. Calculation of the stationary state for the Wright-Fisher process is not as easy, computationally or analytically, though see \cite{imhof2006evolutionary} for some results. We will give computational results for comparison in some cases for the Wright-Fisher process, along with some analytical results and conjectures.

Finally, we will make use of one additional information-theoretic quantity called the Kullback-Leibler divergence \cite{kullback1951information} (or KL-divergence), which is a measure of ``distance'' between probability distributions:
\begin{equation}
 D_{KL}(p||q) = \sum_{i}{p_i \log{p_i} - p_i \log{q_i}}
\label{kl_divergence} 
\end{equation}
This divergence is not a true distance function in the sense of a metric (it does not satisfy the triangle inequality); nevertheless it is a widely used measure of difference between probability distributions. See \cite{cover2006elements} and \cite{shannon1949mathematical} for more on any of the mentioned information theory topics.

\subsection{$n$-fold Moran process}

Since the Wright-Fisher process is a generational process, replacing the entire population in each iteration, and the Moran process is atomic process, they exhibit very different behaviors. Consider the following process, which will be referred to as the $n$-fold Moran process. Define each step of the process as k steps of the Moran process, so that $n=1$ is the Moran process, and $n=N$, where $N$ is the population size, yields a \emph{generational} processes derived from the Moran process. 

The transition probabilities of the $n$-fold process can be computed directly from the transition matrix of the Moran process (equation \ref{moran_process}) by simply computing the $n$-th power of the transition matrix. Since the transition matrix of the Moran process is tridiagonal, each iterate will have two more nonzero diagonals corresponding to the two new population states accessible in each step of the compressed process. Moreover, since the stationary distribution of a Markov chain can be obtained by the rows of the matrix defined by
\[ s = \lim_{m \to \infty}{T^m} = \lim_{m \to \infty}{\left(T^k\right)^m}, \]
the stationary distributions of the $n$-fold Moran process are the same for all $k$, given a fixed transition matrix $T$. The entries of the transition matrix, $T^n_{a \to a'}$ correspond to the probability of moving from population state $(a,N-a)$ to population state $(a',N-a')$ in exactly $n$ steps of the Moran process.

\section{Results}

\subsection{Neutral Evolution: Moran Process}

First consider a population where both types have equal and constant fitness, i.e. for the game matrix of all ones, or more generally, when $f_A(i) = f_B(i)$ for all $i$. Figure \ref{figure_1} shows the entropy rate as a function of the population size $N$ for various $\mu$ for the Moran process. In the case $N=2$, it is easy to show that
\[ H(P) = \frac{2}{2+4 \mu} H((\mu, 1-\mu)) + \frac{4 \mu}{2 + 4\mu} \left(\frac{3}{2} \log 2 \right) ,\]
where $H((\mu, 1-\mu))$ is the binary entropy function (the Shannon entropy for the distribution $(\mu, 1-\mu)$). In this special case, the two mutation regimes yield the same process, which is typically not true for $N > 2$. Though this is a very simple case, it illustrates some common features of these processes. For instance, as $\mu \to 0$, the entropy rate $E \to 0$, a fact which holds for a wide variety of such processes. (We will also see that the constant $\frac{3}{2} \log 2$ plays a special role.)

Fudenberg et al. show in \cite{fudenberg2004stochastic} that if $k = \mu_{BA} / \mu_{AB}$ is fixed along with the population size $N$, then for the uniform mutation regime the stationary distribution of the process converges to
\[ s = \left(\frac{k \rho_A}{k \rho_A + \rho_B}, 0, \ldots, 0, \frac{\rho_B}{k\rho_A + \rho_B} \right),\]
where $\rho_A$ and $\rho_B$ are the fixation probabilities of the types $A$ and $B$ respectively when the population starts with a single individual of the type respectively. The fixation probability depends on the fitness landscape, which is not necessarily neutral. This is an essential ingredient for the following result (all proofs in appendix).

\begin{figure}
    \centering
    \includegraphics[width=0.45\textwidth]{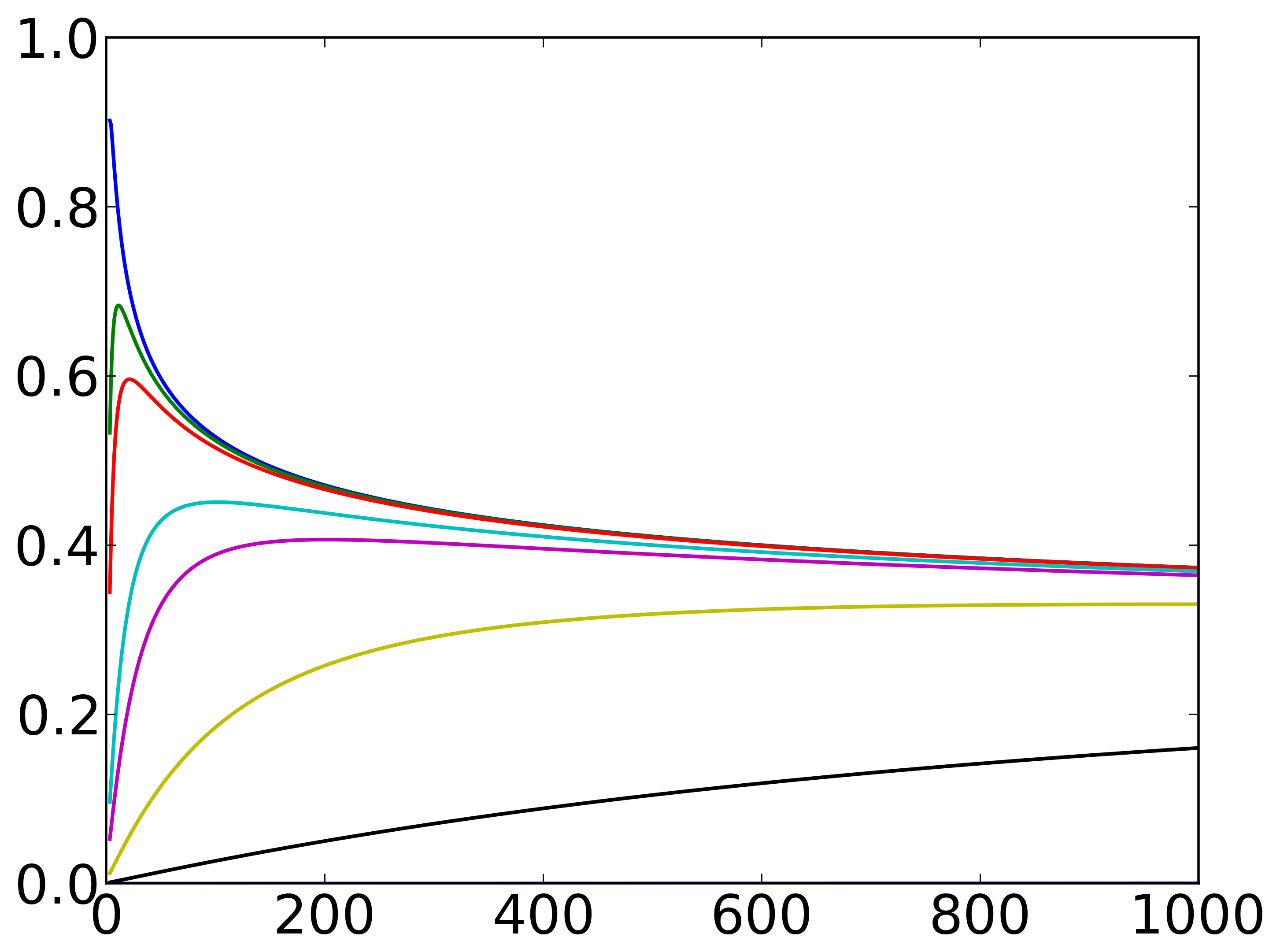}
    \includegraphics[width=0.45\textwidth]{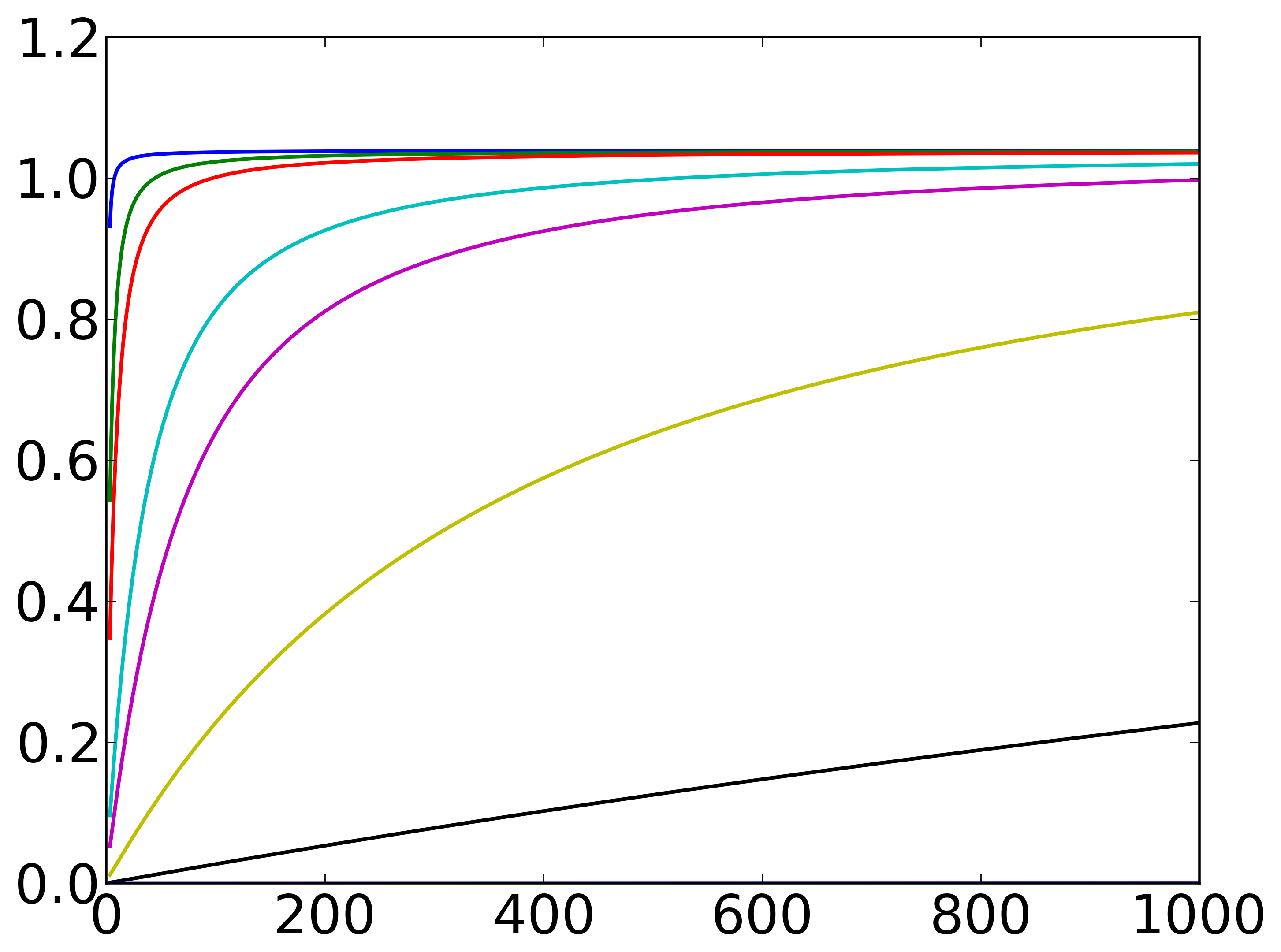}
    \caption{Entropy Rate vs. Population Size $N$ for $\mu_{AB} = \mu_{BA} \in \{0.5, 0.1, 0.05, 0.01, 0.005, 0.001, 0.0001\}$ (top to bottom) with a neutral fitness landscape. Left: Mutations only at the boundary states (boundary regime); the entropy rate eventually approaches zero as $N \to \infty$. Right: Mutations for all states (uniform regime); the entropy rate approaches $3/2 \log 2$ as $N \to \infty$.}
    \label{figure_1}
\end{figure}

\begin{theorem}
Let $\mu_{AB}=\mu$ and $\mu_{BA} = k \mu$. For the Moran process (Equations \ref{moran_process}) with the uniform mutation regime and otherwise arbitrary parameters, $\lim_{\mu \to 0}{H(P)} = 0$.
\end{theorem}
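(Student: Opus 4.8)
The plan is to work directly from the defining formula $H(P) = \sum_{i=0}^{N} s_i H(T_i)$, splitting the sum into the two boundary states $i = 0, N$ and the $N-1$ interior states $1 \le i \le N-1$, and to show that each group of terms vanishes as $\mu \to 0$ — but for complementary reasons: at the boundary the transition entropies become small, while in the interior the stationary mass becomes small. Throughout, $N$ (and all game/fitness parameters) are held fixed, so the sum is finite and coordinatewise convergence of the stationary vector is the same as convergence in $\ell^1$.

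First I would dispose of the boundary terms. By the boundary conventions imposed on the process, $T_{0 \to 1} = \mu_{AB} = \mu$ and $T_{0 \to 0} = 1 - \mu$, so $H(T_0)$ is exactly the binary entropy $H((\mu, 1-\mu))$, which tends to $0$ as $\mu \to 0$; likewise $T_{N \to N-1} = \mu_{BA} = k\mu$ gives $H(T_N) = H((k\mu, 1-k\mu)) \to 0$. Since $0 \le s_0, s_N \le 1$ regardless of the parameters, the boundary contribution $s_0 H(T_0) + s_N H(T_N)$ tends to $0$.

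Next, the interior terms. Because the Moran process is tridiagonal, each interior transition distribution $T_i$ with $1 \le i \le N-1$ is supported on at most the three states $\{i-1, i, i+1\}$, hence $H(T_i) \le \log 3$ uniformly in $\mu$ and in $i$. Therefore $\sum_{i=1}^{N-1} s_i H(T_i) \le (\log 3)\sum_{i=1}^{N-1} s_i$. Now I invoke the result of Fudenberg et al.\ quoted just before the theorem: with $k = \mu_{BA}/\mu_{AB}$ and $N$ fixed, in the uniform regime the stationary distribution converges to $\left(\tfrac{k\rho_A}{k\rho_A + \rho_B}, 0, \ldots, 0, \tfrac{\rho_B}{k\rho_A + \rho_B}\right)$ as $\mu \to 0$, a distribution supported entirely on $\{0, N\}$. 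Consequently $\sum_{i=1}^{N-1} s_i \to 0$, so the interior contribution also vanishes. Adding the two estimates and using $H(P) \ge 0$ yields $\lim_{\mu \to 0} H(P) = 0$.

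I expect the only substantive input to be the convergence of the stationary distribution to a boundary-supported limit — this is precisely the "essential ingredient" flagged in the text, and it is the single step where the uniform-regime hypothesis and the fixed ratio $k$ are genuinely used; everything else is an elementary bound. The one thing to watch is that the argument tacitly needs the process to be well defined, i.e.\ $i f_A(i) + (N-i) f_B(i) \neq 0$ for $i \neq 0, N$, which is implicit in "otherwise arbitrary parameters"; no other case analysis should be needed.
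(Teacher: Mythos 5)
Your proof is correct and follows essentially the same route as the paper: the same decomposition into boundary terms ($i=0,N$, where the transition entropies vanish) and interior terms (where the stationary mass vanishes), with the Fudenberg et al.\ convergence of the stationary distribution to a $\{0,N\}$-supported limit doing the real work. Your uniform bound $H(T_i) \le \log 3$ on the interior transition entropies is a slightly cleaner way to dispose of that factor than the paper's remark that the transition probabilities depend at most linearly on $\mu$, but the argument is the same.
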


Though simple to prove given the result of \cite{fudenberg2004stochastic}, this theorem embodies an important fact about mutation in evolutionary processes. In this case the entropy rate reflects the fact that in the absence of mutation, the long run behavior of the population is fixation on one of two types, and the inherent randomness of the population dynamics is eliminated. The same limit holds for the boundary regime as well.

\begin{theorem}
For the boundary mutation regime and assumptions otherwise the same as in Theorem 1, $\lim_{\mu \to 0}{H(P)} = 0$.
\end{theorem}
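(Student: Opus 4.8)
The plan is to exploit the structure of the boundary regime, in which all of the $\mu$-dependence of the transition matrix is confined to the two boundary states $0$ and $N$; consequently both the interior transition entropies and, after a single rescaling, the interior stationary weights can be written down explicitly, and the claim will follow from the decomposition $H(P) = \sum_i s_i H(T_i)$ by estimating the boundary and interior contributions separately.

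First I would record the transition data. In the boundary regime $\mu_{AB}(i) = \mu_{BA}(i) = 0$ for $0 < i < N$, so for each interior state $i$ the probabilities $T_{i\to i\pm1}$ and $T_{i\to i}$ of \ref{moran_process} depend only on the fitness landscape and on $N$; in particular each interior transition entropy $H(T_i)$ is a constant independent of $\mu$ and at most $\log 3$. At the endpoints $T_0 = (1-\mu,\mu,0,\dots,0)$ and $T_N = (0,\dots,0,k\mu,1-k\mu)$, so $H(T_0) = H((\mu,1-\mu))$ and $H(T_N) = H((k\mu,1-k\mu))$, both of which tend to $0$ as $\mu \to 0$ (here $k > 0$, which is forced by the requirement that state $N$ not be absorbing).

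Next I would show the stationary distribution concentrates on $\{0,N\}$. By \eqref{s_j}, for $1 \le j \le N-1$ the product $\prod_{i=0}^{j-1} T_{i\to i+1}/T_{i+1\to i}$ contains exactly one $\mu$-dependent factor, namely the $i=0$ term $T_{0\to1}/T_{1\to0} = \mu/T_{1\to0}$, while every other factor is a ratio of interior transition probabilities and hence independent of $\mu$; thus $s_j = \mu\,c_j\,s_0$ for a finite positive constant $c_j$ (finiteness uses that the interior backward probabilities in the denominators do not vanish, which holds when $f_A,f_B > 0$). For $j = N$ the product picks up one additional factor, the $i = N-1$ term $T_{N-1\to N}/T_{N\to N-1} = T_{N-1\to N}/(k\mu)$, whose $\mu$ in the denominator cancels the one supplied by the $i=0$ term, so $s_N = d\,s_0$ with $d$ a finite positive constant independent of $\mu$. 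Substituting into the normalization \eqref{s_0} gives $s_0 = (1 + d + \mu\sum_{j=1}^{N-1}c_j)^{-1} \to (1+d)^{-1}$, hence $s_N \to d/(1+d)$ and $s_j = O(\mu) \to 0$ for every interior $j$; in particular $s_0 + s_N \to 1$. Finally, splitting $H(P) = s_0 H(T_0) + s_N H(T_N) + \sum_{0 < j < N} s_j H(T_j)$, the first two terms vanish since $H(T_0), H(T_N) \to 0$ while $s_0, s_N \le 1$, and the sum vanishes since each $s_j \to 0$ while each $H(T_j) \le \log 3$; as $H(P) \ge 0$ this yields $\lim_{\mu\to0} H(P) = 0$.

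The only delicate point is the $\mu$-accounting in the preceding step: one must verify that the single factor of $\mu$ contributed by the left boundary survives in every interior $s_j$ yet is exactly cancelled at $j = N$ by the right-boundary factor, and that the constants $c_j$ and $d$ are finite. The latter is where nondegeneracy of the fitness landscape enters (so that no interior backward transition probability vanishes); if one wishes to allow fitnesses that vanish at some interior states, \eqref{s_j} should be replaced by a direct argument that such states are transient for, or lie on a sub-chain draining into, the states $0$ and $N$ that become absorbing in the limit, so that they too carry $o(1)$ stationary mass, and the rest of the argument goes through unchanged.
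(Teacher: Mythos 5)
Your proof is correct and follows essentially the same route as the paper: both use the tridiagonal stationary-distribution formula \eqref{s_j} to argue that every interior $s_j$ carries an uncancelled factor of $\mu$ (from $T_{0\to 1}=\mu$) while that factor is cancelled at $j=N$ by $T_{N\to N-1}=k\mu$, so the stationary mass concentrates on the boundary states whose transition entropies vanish as $\mu\to 0$. Your write-up is considerably more detailed than the paper's one-sentence version of this $\mu$-accounting, and the closing remark about degenerate fitness landscapes addresses a case the paper leaves implicit, but the underlying argument is the same.
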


\subsection{Large Populations} It may be tempting intuitively to think that for large population sizes that the entropy rate also tends to zero since an infinitely large population should not be subject to evolutionary drift, or otherwise have reduced variance in the viable long term states. Figure \ref{figure_1} suggests that this is not so in the uniform mutation regime. Indeed, the entropy rate need not vanish in the large population limit, as we will see for the neutral landscape, but for the same landscape with the boundary regime, the entropy rate does vanish. Consider the neutral landscape with the boundary mutation regime for arbitrary $N$ and $\mu$. A straightforward calculation shows that
\begin{align}
 s_0 = s_N &= \left(2 + \mu N^2 \sum_{i=1}^{N-1}{\frac{1}{i (N-i)}}\right)^{-1}\\
 s_j &= \left(2 + \mu N^2 \sum_{i=1}^{N-1}{\frac{1}{i (N-i)}}\right)^{-1}\frac{\mu N^2}{j (N-j)}
 \label{neutral_boundary}
\end{align}
As expected from Theorem 2, it is still the case that as $\mu \to 0$, $s_0 = s_N \to 1/2$ and $s_j \to 0$ for the boundary mutation regime. The summation $\sum_{i=1}^{N-1}{\frac{1}{i (N-i)}} = 2 h_{N-1} / N \approx 2\log{N} / N$, where $h_n$ is the $n$th harmonic number. For large $N$ and fixed $\mu$, $p_i \to 0$ for all $i$. However, the $p_i$ do not converge to zero at the same rates asymptotically. For $i \neq 0, N$, $p_i \thicksim 1/\log{N}$ near the boundaries and as $p_i \thicksim 1/(N\log{N})$ when $i \approx N/2$, $i=0, N$. Nevertheless, it is the case that $H(P) \to 0$ as $N \to \infty$, which can be shown with a tedious but direct calculation using Sterling's approximation and the fact that $h_n \approx \log n$.

For the uniform mutation regime, the entropy rate does not approach zero as $N \to \infty$. In this case, the interior probabilities are larger, with $i \approx N/2$ being the maximum. There is a significant contribution to the entropy rate from this state (and nearby states) because the transition probabilities at this state are approximately $T_i = (1/4, 1/2, 1/4)$ for large $N$, so there is a contribution of $H(T_i) = 3/2 \log{2}$ to the entropy rate (weighted by the stationary distribution). For fixed $\mu$ and large $N$, the entropy rate converges to this value. As an illustration, consider the following example for the uniform mutation case with $\mu=1/2$. Then the stationary distribution is

\begin{equation}
 s_i = 2^{-N} \binom{N}{i},
\label{stationary_mu_one_half} 
\end{equation}
which is maximal at $i=N/2$, rather than at $i=0, N$ for the same process with only boundary mutations. Hence these processes can have very different stationary distributions for fixed $\mu$ and $N$ despite their similarity in definition. The entropy rate is
\[ H(P) = \sum_{i}{ \frac{1}{2^N} \binom{N}{i} H\left(\frac{N-i}{2N}, \frac{1}{2}, \frac{i}{2N} \right) },\]
which approaches $3/2 \log 2$ as $N \to \infty$.

For this example, as a function of $N$ the entropy rate is strictly increasing, since the stationary distribution is concentrating on the center where the entropy of the transition probabilities is largest. The limit holds for all fixed $\mu$ for the neutral landscape. Equations \ref{s_0} and \ref{s_j} imply that $s_j = s_{N-j}$ for all $j$ and that $s_0 < s_1 < \cdots < s_{\left \lfloor \frac{N}{2} \right \rfloor} = s_{\left \lceil \frac{N}{2} \right \rceil} > \cdots > s_{N-1} > s_N$. Then note that for $i < N / 2$, $p_{i+1} / p_i$ has higher order dependence on $N$ as $i$ approaches the central state(s). Similarly for $i > N / 2$, so the stationary distribution is increasing concentrated on the central state(s) as $N \to \infty$, giving an increasing entropy rate.

\subsection{Asymmetric mutation probabilities} So far we have only considered explicit examples where $\mu_{AB} = \mu = \mu_{BA}$. While all proper choices of $\mu_{AB}$ and $\mu_{BA}$ give a maximum entropy rate as $N$ varies, in the case of a neutral landscape, the maximum is largest when the mutation rates are equal. This is simply because for unequal mutation rates, the stationary distribution is no longer concentrated on the states with the largest transition entropies. More precisely, in Equations \ref{s_j} and \ref{s_0}, the factors corresponding to  $T_{1 \to 0} = \mu_{AB}$ and $1 / T_{N \to N-1} = 1 / \mu_{BA}$ no longer cancel, which has the effect of shifting the stationary distribution toward one of the boundary states depending on the value of $k = \mu_{BA} / \mu_{AB}$. It is possible to solve for the stationary distribution in the boundary regime for the neutral landscape as above (compare to Equations (\ref{neutral_boundary})):
%
\begin{align*}
 s_0 &= \left(1 + \mu_{AB} N^2 \sum_{i=1}^{N-1}{\frac{1}{i (N-i)}} + \frac{1}{k}\right)^{-1}\\
 s_j &= s_0\frac{\mu_{AB} N^2}{j (N-j)} \\
 s_N &= \frac{s_0}{k}\\
\end{align*}
\noindent Although the analytic calculation is messier for the uniform mutation regime, numerical computations indicate the unequal mutation probabilities give the same tendency to shift the stationary distribution away from the central states (where the transition entropies are larger). 

The two mutation regimes both have their merits: the boundary regime \cite{claussen2005non} is the simplest approach that guarantees the existence of a stationary distribution for most game matrices and is typically easier to generate analytic results for. The uniform mutation regime\cite{fudenberg2004stochastic} is perhaps a more realistic model of mutation, but has more complex transition probabilities. In any case, the resulting processes are of different character in certain parameter ranges, particularly in their large population behavior. Figure \ref{figure_kl_regimes} shows the KL-divergence between the stationary states of the uniform and boundary regimes for the neutral landscape for a variety of parameters.

\begin{figure}
    \centering
    \includegraphics[width=0.9\textwidth]{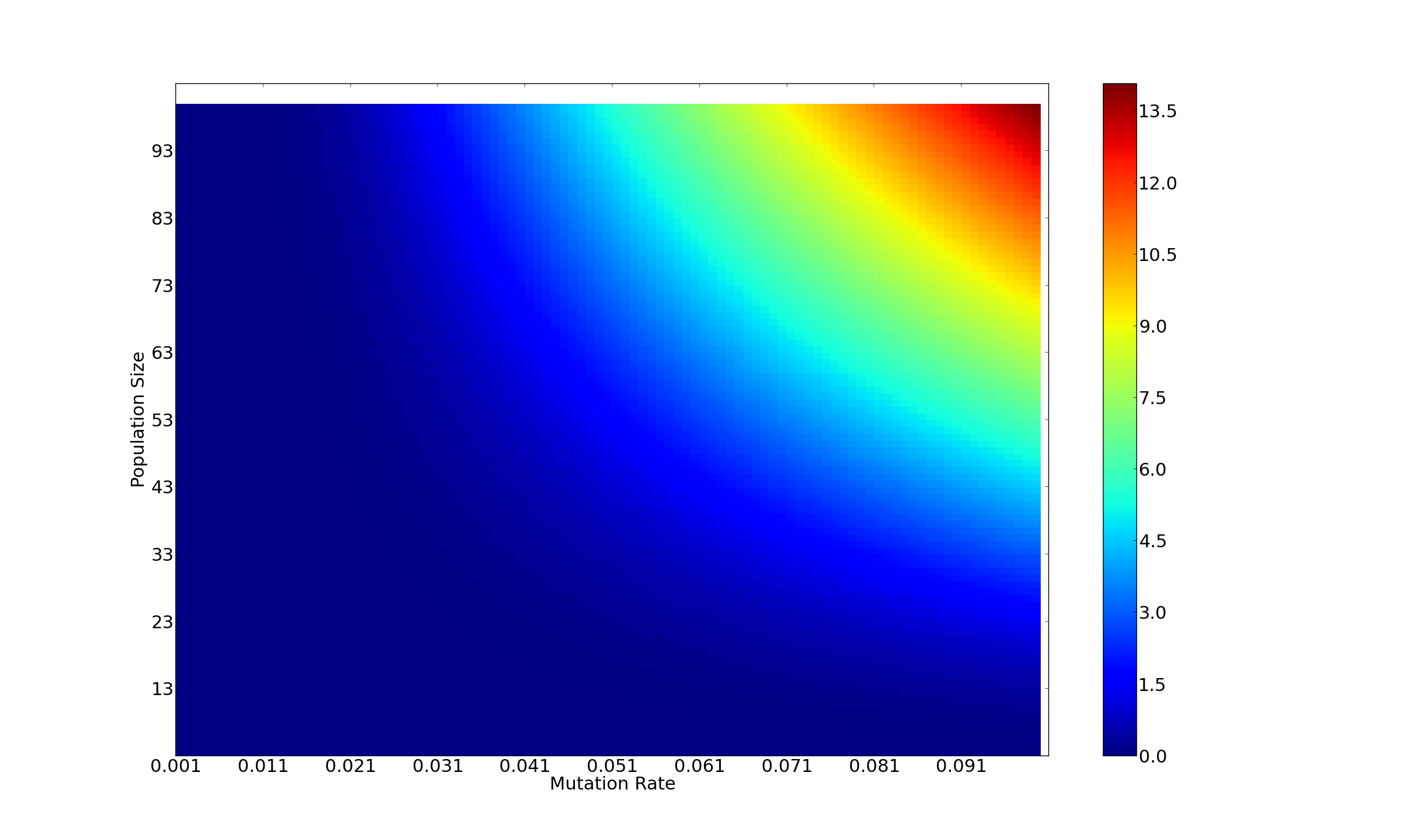}
    \caption{KL-divergence (equation \ref{kl_divergence}) for the stationary distributions of Moran processes for the neutral fitness landscape computed with the boundary regime and the uniform regime. For small values of $\mu$ (horizontal axis) relative to $N$ (vertical axis), i.e. $N \mu << 1$, the distance between the stationary distributions is small, in accordance with Theorems 1 and 2. For larger population sizes and fixed $\mu$, the stationary distributions diverge substantially. The hyperbolic curves of constancy are given by $C = N \mu$ for various constants $C$.}
    \label{figure_kl_regimes}
\end{figure}

\subsection{Maximum Entropy Rate for the Moran process} An obvious question resulting from this section is simply: what is the maximum entropy rate any Moran process can achieve? The maximum theoretical value is $\log 3$, and the entropy rate is limited by the maximum transition entropy. Suppose that for some $i$ that $T_{i \to i+1} = T_{i \to i-1}$. Then it is a simple algebraic exercise to show, for boundary mutations, that $f_A(i) = f_B(i)$ and the transition distribution is $T_i = \left(\frac{i (N-i)}{N^2}, \frac{(N-i)^2 + i^2}{N^2}, \frac{i (N-i)}{N^2}\right)$, which is maximal and equal to $3/2 \log 2$ when $i = N /2$. (Also it is true that if $f_A(i)=f_B(i)$, the same distribution holds.) So it is not possible for the transition distribution to be $(1/3, 1/3, 1/3)$, which would produce that maximum value of $\log 3$. Similarly equating other transitions leads to entropies less than $3/2 \log 2$. While it may seem possible to contrive the transitions to whatever distribution one desires, once one specifies the four variables needed for any one of the transitions, the other two are determined. In fact, $3/2 \log 2$ is the maximum possible value for any mutation regime, population size, and fitness landscape, so long as a unique stationary distribution exists (proof in Appendix). This means that the neutral landscape achieves the largest possible entropy rate, and so verifies the intuition that it should have the largest inherent uncertainty in long run population behavior.

\section{Neutral Evolution: Wright-Fisher Process}

Stationary distributions for the Wright-Fisher process are difficult to compute analytically in general. We can develop bounds for some important special cases. First, notice that for any state $i \neq 0, N$, the transition distribution $T_i = (T_{i \to 0}, \ldots, T_{i \to N})$ is a binomial distribution. Although a convenient closed form for the entropy of a binomial distribution does not exist, there is a useful approximation given by the normal approximation to the binomial (the de-Moivre-Laplace central limit theorem). Given a binomial distribution with probability $p$ and $N$ trials, the entropy is approximately
\[H\left(\text{binomial}(N, p)\right) = \frac{1}{2} \log \left(2 \pi e N p (1-p)\right) + O\left(\frac{1}{N}\right).\]
For fixed $N$, the maximum value occurs when $p=1/2$. For states $i=0$ and $i=N$, the distributions are just $(1-\mu, \mu, 0, \ldots, 0)$ and $(0, \ldots, 0, \mu, 1-\mu)$.

\begin{figure}
    \centering
    \includegraphics[width=0.45\textwidth]{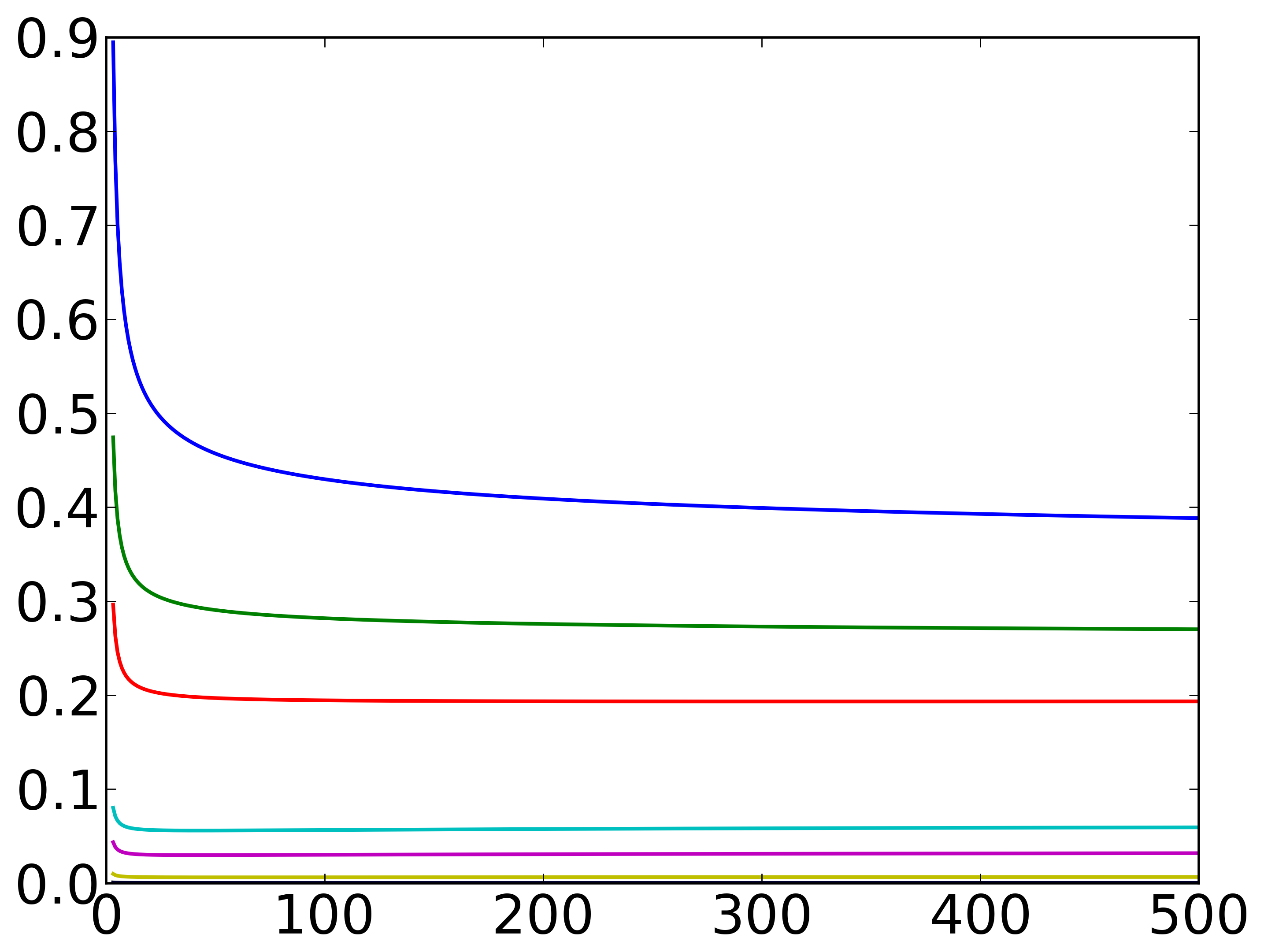}
    \includegraphics[width=0.45\textwidth]{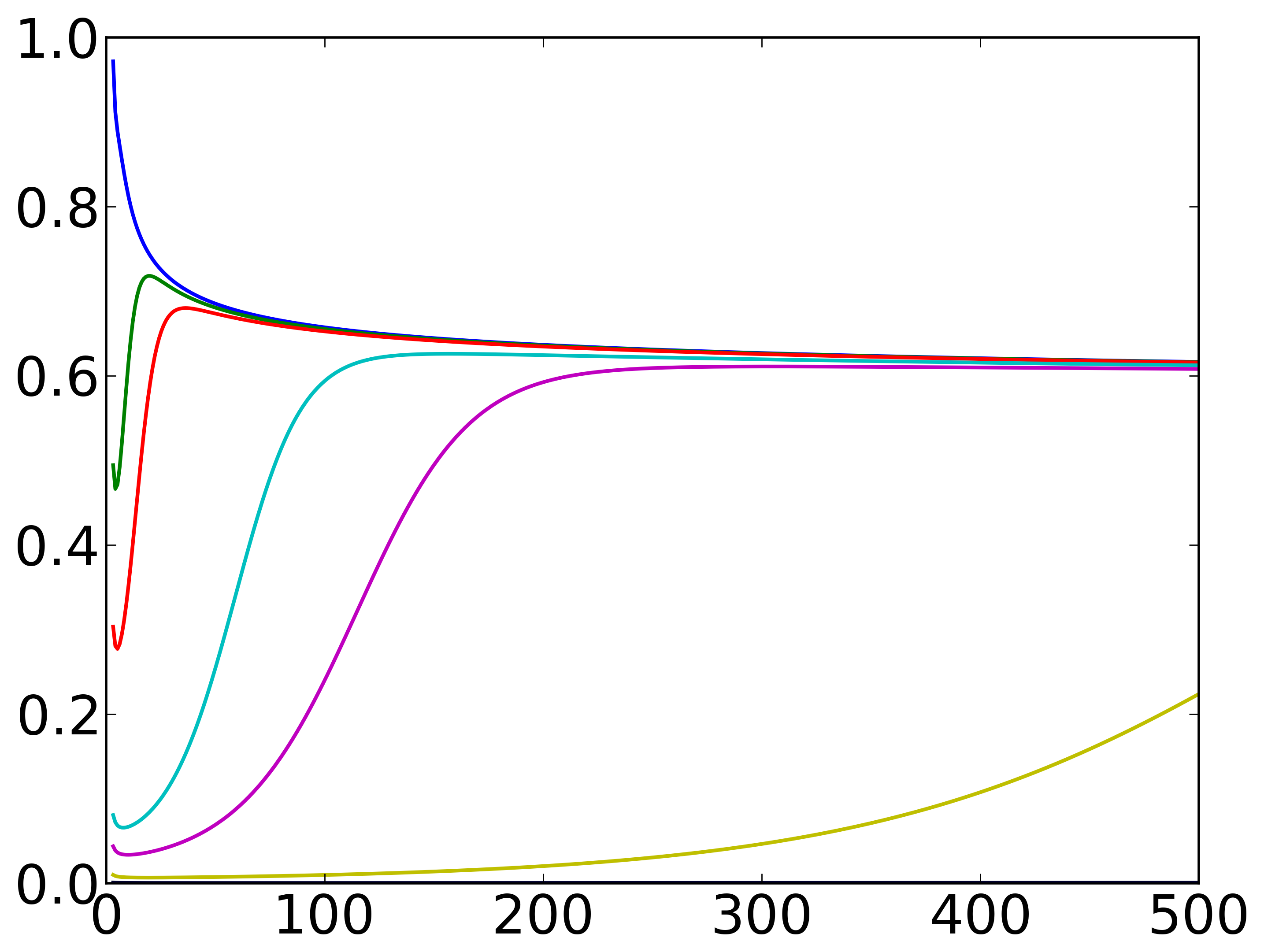}
    \caption{Scaled Entropy Rate vs. Population Size $N$ for $\mu_{AB} = \mu_{BA} \in \{0.5, 0.1, 0.05, 0.01, 0.005, 0.001, 0.0001\}$ (top to bottom) with a neutral fitness landscape for the Wright-Fisher process. Entropy rates are divided by $\log N$. Left: Mutations only at the boundary states (boundary regime); the entropy rate approaches a value less than 1/2 as $N \to \infty$. Right: Mutations for all states (uniform regime); the entropy rates approach $1/2$ as $N \to \infty$. Compare to Figure \ref{figure_1}.}
    \label{figure_wright_fisher_neutral}
\end{figure}

To compare to the Moran process, let us consider some similar scenarios to the previous section. Consider a very special case where the process is governed by uniform mutations with $\mu = \mu_{AB} = \mu_{BA} = 1/2$ (for any fitness landscape). Then the transition probabilities are given by $T_{i \to j} = \binom{N}{j}2^{-N}$ and so we have a binomial distribution with $p=1/2$ for all $i \neq 0, N$. These distributions are largest at the central state(s) and decrease monotonically away from the center. For $N \mu >> 1$ the stationary distribution is concentrated on the central states, and the entropy rate is
\begin{equation}
 H(P) \approx \frac{1}{2} \log \left(\frac{\pi e}{2}\right) + \frac{1}{2} \log{N}
 \label{wright_fisher_bound}
\end{equation}
\noindent which is less than the theoretical maximum of $\log N$, but still unbounded in $N$. Computational results verify that this formula is an excellent approximation of the entropy rate even for small $N \approx 15$. This is the maximum attainable entropy rate for the Wright-Fisher process since it is the maximum possible entropy for a binomial distribution. For the boundary mutation regime, the entropy rate appears to be increasing for fixed $\mu$ as $N$ increases, but bounded by a smaller value. See Figure \ref{figure_wright_fisher_neutral} for plots of entropy rates for various $\mu$ and $N$ for the neutral landscape.

For the neutral fitness landscape but with boundary mutations only, the transition probability distributions $T_i$ are given by a binomial distribution on $p=i/N$. While at the central point $i=N/2$ the transition entropy is the same as for the uniform mutation case, the entropies are decreasing away from the central point. Moreover, the stationary distribution is concentrated at the boundary states $i=0$ and $i=N$, and so the entropy rate is much smaller. For fixed $N$, computations indicate the stationary distribution approaches $1/2$ at the boundary states and the entropy rate tends to the binary entropy $H((\mu, 1-\mu))$ plus contributions from the boundary adjacent states, which approaches zero as $\mu \to 0$. This also appears to be the case for fixed $N$ and $\mu \to 0$ in the uniform mutation case as well. This is because as for $N \mu << 1$, the transition distributions $T_i$ approach binomials with $p \approx i / N$, which skews the activity of the process toward the boundary states in the case of uniform mutations. For boundary mutations, small $\mu$ means that the boundary states are more absorptive, and so the stationary distribution has weight concentrated at the boundaries. Hence just as for the Moran process, the behavior is very similar for small $\mu$ for the different mutation regimes, and the entropy rate goes to zero as $\mu$ does. The analogous plot from Figure \ref{figure_kl_regimes} for the Wright-Fisher process is nearly identical in shape (but not magnitude), so we will omit it.

We end this section with the following conjectures. A suitable theorem for the stationary distribution analogous to the theorem from \cite{fudenberg2004stochastic} used for Theorem 1 in the previous section would partially establish the second conjecture. Note that the upper bound above (Equation \ref{wright_fisher_bound}) is not among the conjectures as the discussion contains sufficient proof.
\subsection{Conjectures}
Based on computation evidence and comparison with the Moran process, we state the following conjectures for the entropy rate of the Wright-Fisher Process.

\noindent \begin{conjecture}
    \begin{enumerate}
    \item For the boundary mutation regime with $\mu_{AB} = \mu = \mu_{BA}$ and neutral landscape, there exists a constant $C_{\mu} \leq 1/2$ such that $\lim_{N \to \infty}{H(P) / \log{N}} = C_{\mu}$.\\
    \item Let $\mu_{AB}=\mu$ and $\mu_{BA} = k \mu$. For both the uniform and boundary mutation regimes, $\lim_{\mu \to 0}{H(P)} = 0$.
    \end{enumerate}
\end{conjecture}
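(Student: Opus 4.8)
\noindent The conjecture splits into two parts calling for different methods. Part~(2) should follow from a soft perturbation argument, in the spirit of the proofs of Theorems~1 and~2 but demanding less of the stationary distribution. Write $H(P)=\sum_{i=0}^{N}s_i(\mu)\,H(T_i)$ with $N$ fixed. In either mutation regime the boundary distributions $T_0$ and $T_N$ have entropies tending to $0$ as $\mu\to 0$ (in the boundary regime they equal $H((\mu,1-\mu))$ and $H((k\mu,1-k\mu))$; in the uniform regime they are entropies of binomials with parameter $O(\mu)$), while $H(T_i)\le\log(N+1)$ at the $N-1$ interior states, so
\[
H(P)\ \le\ H(T_0)+H(T_N)+\bigl(1-s_0(\mu)-s_N(\mu)\bigr)\log(N+1).
\]
It therefore suffices to show $s_0(\mu)+s_N(\mu)\to 1$ as $\mu\to 0$. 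Now $T(\mu)$ depends continuously on $\mu$ (the interior entries are rational in $\mu$ with denominators $i f_A(i)+(N-i)f_B(i)>0$ under the standing assumption of positive fitnesses), and $T(0)$ is the mutation-free Wright--Fisher matrix, for which $0$ and $N$ are absorbing and every interior state transient. By compactness of the simplex, any subsequential limit $s^{*}$ of $s(\mu)$ as $\mu\to 0$ satisfies $s^{*}=s^{*}T(0)$ and hence is supported on the recurrent set $\{0,N\}$; thus $1-s_0(\mu)-s_N(\mu)\to 0$ and $H(P)\to 0$. The argument is identical for the two regimes, since they share the limit $T(0)$, and it is weaker than a full Wright--Fisher analogue of the Fudenberg et al.\ formula: we need only that the limiting stationary mass lies on $\{0,N\}$, not its precise split.

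\medskip

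\noindent For part~(1) I first reduce to a statement about the stationary distribution. With the neutral landscape and boundary mutations, $T_i$ for $1\le i\le N-1$ is the binomial$(N,i/N)$ distribution, of variance $\sigma_i^2=i(N-i)/N\le N/4$. The standard bound relating the entropy of an integer-valued variable to its variance gives $H(T_i)\le\tfrac12\log(2\pi e\,\sigma_i^2)+O(1)\le\tfrac12\log N+O(1)$, and since $H(T_0),H(T_N)=O(1)$ this already yields $H(P)\le\tfrac12\log N+O(1)$, hence $\limsup_N H(P)/\log N\le\tfrac12$. For the matching lower bound one invokes the local limit theorem (Poisson approximation near the boundary, de Moivre--Laplace in the bulk) to get $H(T_i)=\tfrac12\log\!\bigl(i(N-i)/N\bigr)+O(1)$ uniformly in $i$, so that
\[
\frac{H(P)}{\log N}\ =\ \frac{1}{2\log N}\,\mathbb{E}_{s}\!\bigl[\log(i(N-i)/N)\,;\,1\le i\le N-1\bigr]\ +\ o(1);
\]
the conjecture is then equivalent to convergence of this normalized expectation.

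\medskip

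\noindent To prove that convergence I would use the regenerative structure of the chain, taking visits to $\{0,N\}$ as renewal times: a cycle consists of a geometric sojourn at a boundary state (mean $\Theta(1/\mu)$) followed by a neutral Wright--Fisher excursion into the interior started from $1$ or $N-1$, and renewal--reward gives $\mathbb{E}_s[g(i);\mathrm{int}]=\mathbb{E}[\,\sum_{t<\sigma}g(i_t)\mathbf{1}\{i_t\neq 0,N\}\,]/\mathbb{E}[\sigma]$. The crucial analytic input is control of neutral Wright--Fisher sojourn times: the Green's function $G_N(1,j)$ (expected number of visits to $j$ before the chain reaches $\{0,N\}$, started from $1$) should satisfy $G_N(1,j)\sim 2/j$ uniformly for $1\le j\le N-1$, as read off the Wright--Fisher diffusion sojourn density $\phi(y)=2p/y$ at $p=1/N$, supplemented by a quantitative de Moivre--Laplace estimate valid down to $j$ of order one. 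Granting this, $\sum_j G_N(1,j)=2\log N+O(1)$, $\sum_j G_N(1,j)\log(j(N-j)/N)=(\log N)^2+O(\log N)$, and the boundary holding time per cycle is $\Theta(1/\mu)$, so $\mathbb{E}_s[\log(i(N-i)/N);\mathrm{int}]=(\log N)^2(1+o(1))/(\Theta(1/\mu)+2\log N)\sim\tfrac12\log N$ and $H(P)/\log N\to C_\mu\le\tfrac12$. The same computation in fact points to the sharper (and cleaner) statement that the limit is a universal constant, our estimates giving $C_\mu=\tfrac14$ independent of $\mu$, with the $\mu$-dependence confined to an $O(1/\log N)$ correction --- which would also explain why the curves in Figure~\ref{figure_wright_fisher_neutral} sit visibly below $\tfrac12$.

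\medskip

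\noindent The step I expect to be the main obstacle is this last one. Unlike the Moran process the Wright--Fisher chain is not birth--death, so the product formula~(\ref{s_j})--(\ref{s_0}) and detailed balance are unavailable, and one must instead establish the sojourn estimate $G_N(1,j)\sim 2/j$ \emph{uniformly down to $j$ of order one} --- essentially a local limit theorem for the neutral Wright--Fisher chain near an absorbing boundary with explicit error bounds. This is precisely the ``suitable theorem for the stationary distribution'' flagged above; by contrast part~(2), the reduction, and the $\tfrac12$ upper bound are routine.
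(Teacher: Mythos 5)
First, a point of reference: the paper states this result only as a conjecture and offers no proof, so there is nothing of the author's to compare your argument against; I am judging the proposal on its own terms. Your part~(2) is, as far as I can see, a complete and correct proof, and it improves on the paper's own remark: the text suggests that a Wright--Fisher analogue of the Fudenberg et al.\ stationary-distribution theorem would be needed, but your compactness argument shows that only the qualitative fact that every stationary vector of $T(0)$ is supported on the recurrent set $\{0,N\}$ is required, and that fact is automatic once $0$ and $N$ are absorbing and the interior is transient for $T(0)$. Do make the hypotheses explicit --- positive fitness at all interior states (so that $T(\mu)\to T(0)$ with the stated absorption structure) and irreducibility for $\mu>0$ (so $s(\mu)$ is well defined); the paper's own prisoner's-dilemma example with $f_A(1)=0$ shows these can fail.

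Part~(1) is where the genuine gap lies. The upper bound $\limsup_N H(P)/\log N\le \tfrac12$ via the maximum-entropy-given-variance inequality is correct and unconditional, and settles the ``$\le 1/2$'' half of the conjecture modulo existence of the limit. But the existence of the limit rests entirely on the sojourn estimate $G_N(1,j)\sim 2/j$ holding uniformly from $j$ of order one through the bulk, and that is precisely the hard analytic content: the diffusion approximation justifies it only for $j/N$ bounded away from $0$ and $1$, while for $j=O(1)$ the neutral chain started at $1$ is a critical Poisson Galton--Watson process, and one must match its Green's function to the diffusion one with errors small enough that $\sum_j G_N(1,j)\log j=(\log N)^2+O(\log N)$ survives. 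You name this as the obstacle, which is the right diagnosis, but as written it is an assumption rather than a lemma, so the argument does not yet establish that $H(P)/\log N$ converges. Two smaller points: the renewal--reward identification of the interior stationary mass with $\tfrac12\bigl(G_N(1,\cdot)+G_N(N-1,\cdot)\bigr)$ deserves a sentence of justification, since the Wright--Fisher chain is not a birth--death chain and detailed balance is unavailable --- you need the regenerative representation together with the observation that, by symmetry of the neutral landscape and of $\mu_{AB}=\mu_{BA}$, the induced two-state chain on $\{0,N\}$ has uniform stationary law; and the prediction $C_\mu=\tfrac14$ independent of $\mu$, while consistent with the numerics and an attractive sharpening of the conjecture, inherits the same conditional status as the sojourn estimate.
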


\section{Constant Fitness: Moran Process}

Now we turn back to the Moran process and investigate non-neutral fitness landscapes. Consider a population in which one type has constant relative fitness $r$, i.e. for the game matrix $a = b = r$ and $c = d = 1$, corresponding to the classical Moran process.
Figure \ref{figure_moran_constant_fitness} shows the relationship between $N$ and $r$ for various $\mu > 0$. In particular, it is clear from the heatmaps that the maximum values for a given $N$ occur for $r = 1$ (i.e. tracing horizontally along any of the heatmaps). Moreover, the entropy rates seem to behave the same as the neutral landscape for large $N$ for both mutation regimes. The behavior for small $\mu$ is governed by Theorems 1 and 2.

\begin{landscape}
\centering
\begin{figure}[h]
        \begin{subfigure}[b]{0.4\textwidth}
            \centering
            \includegraphics[width=\textwidth]{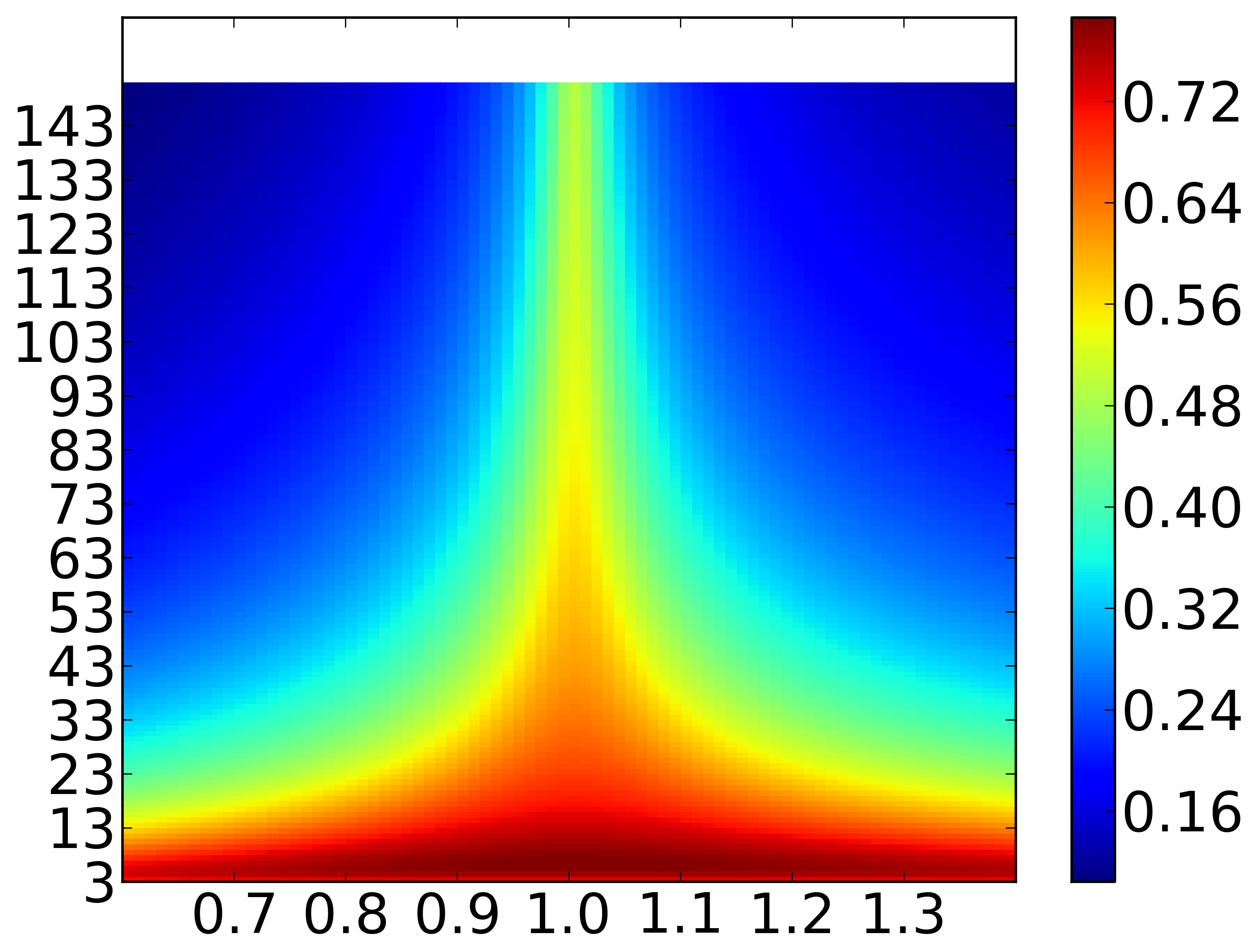}
        \end{subfigure}%
        ~ 
        \begin{subfigure}[b]{0.4\textwidth}
            \centering
            \includegraphics[width=\textwidth]{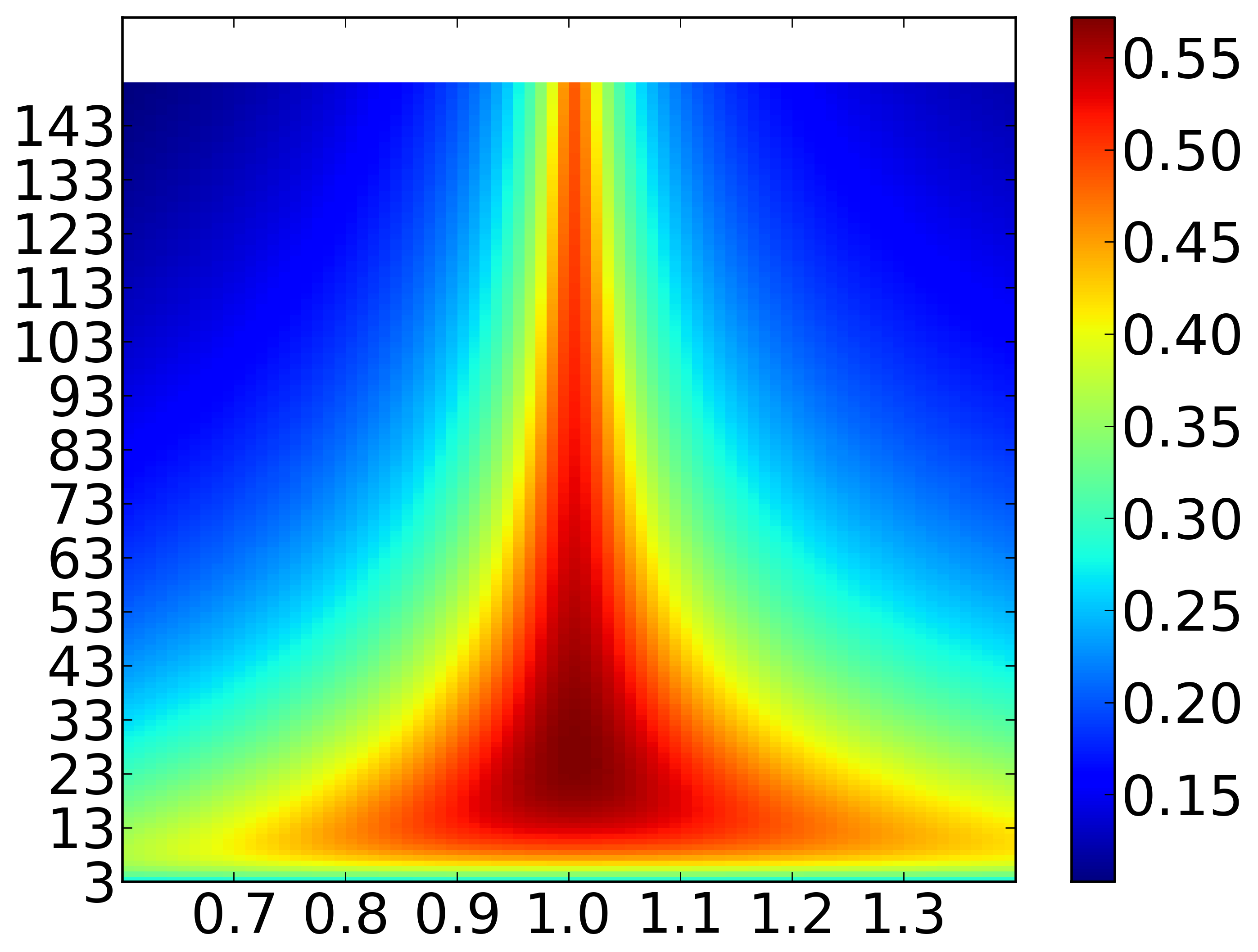}
        \end{subfigure}
        \begin{subfigure}[b]{0.4\textwidth}
            \centering
            \includegraphics[width=\textwidth]{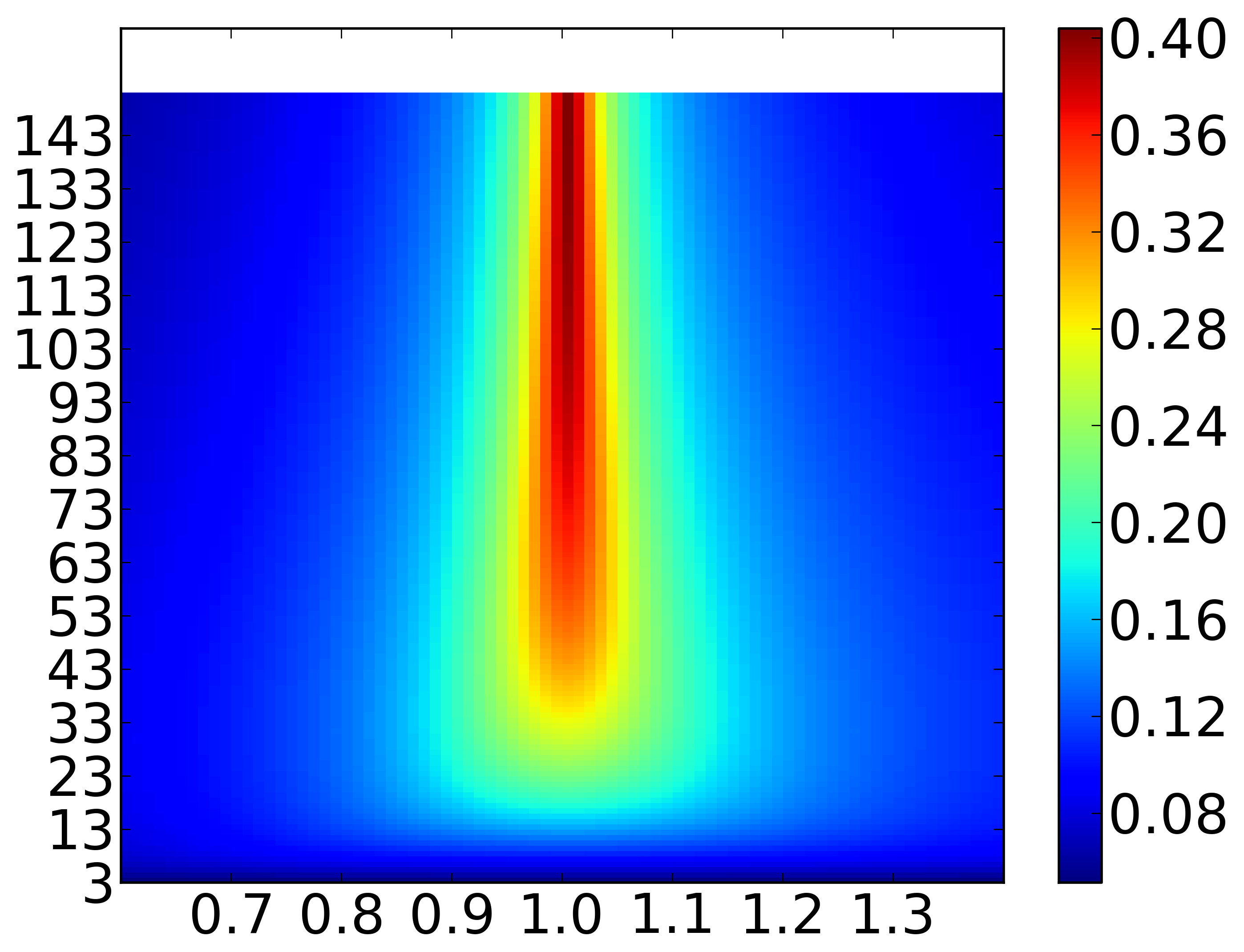}
        \end{subfigure}
        \\
        \begin{subfigure}[b]{0.4\textwidth}
            \centering
            \includegraphics[width=\textwidth]{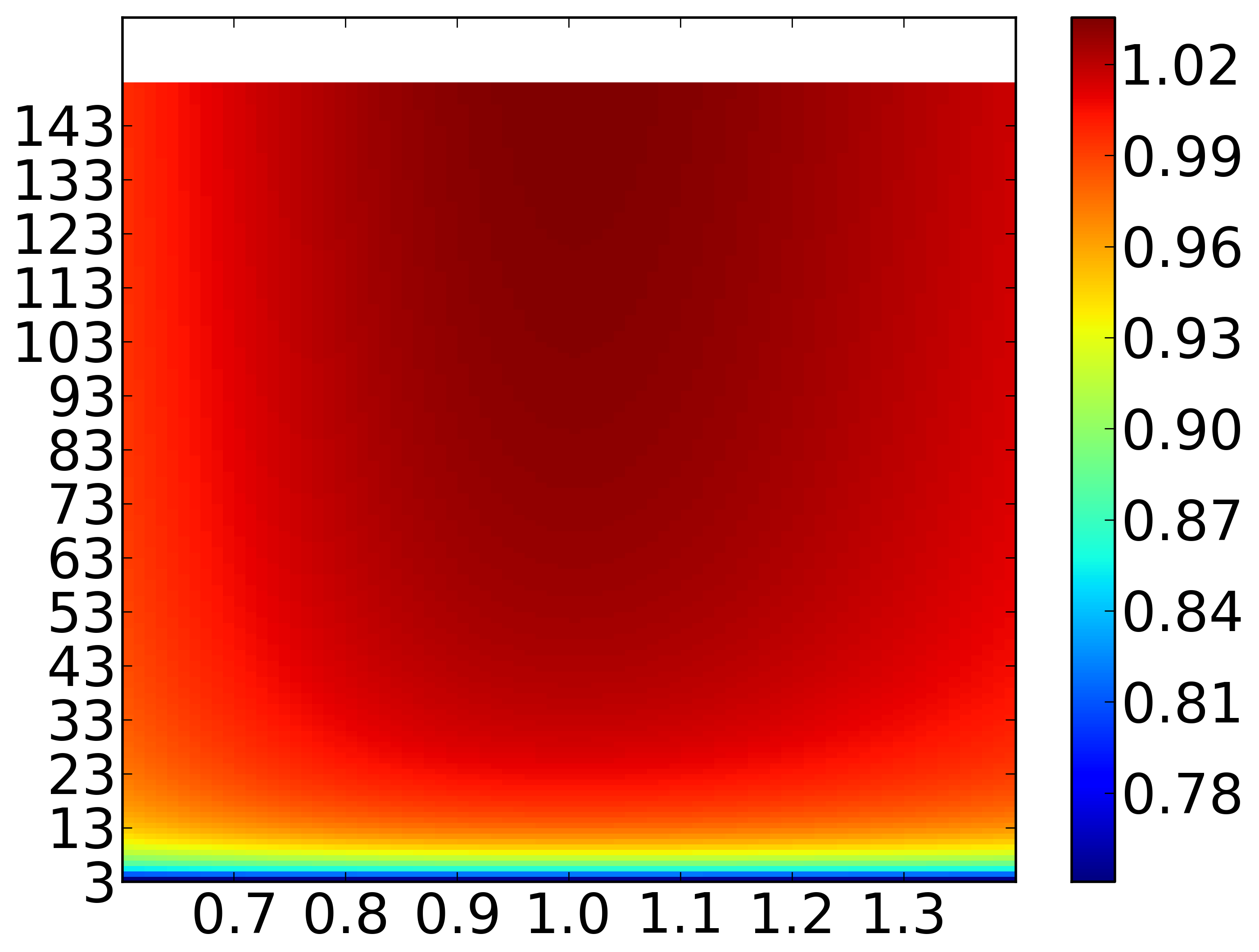}
            \caption{$\mu=0.2$}
        \end{subfigure}%
        ~ 
        \begin{subfigure}[b]{0.4\textwidth}
            \centering
            \includegraphics[width=\textwidth]{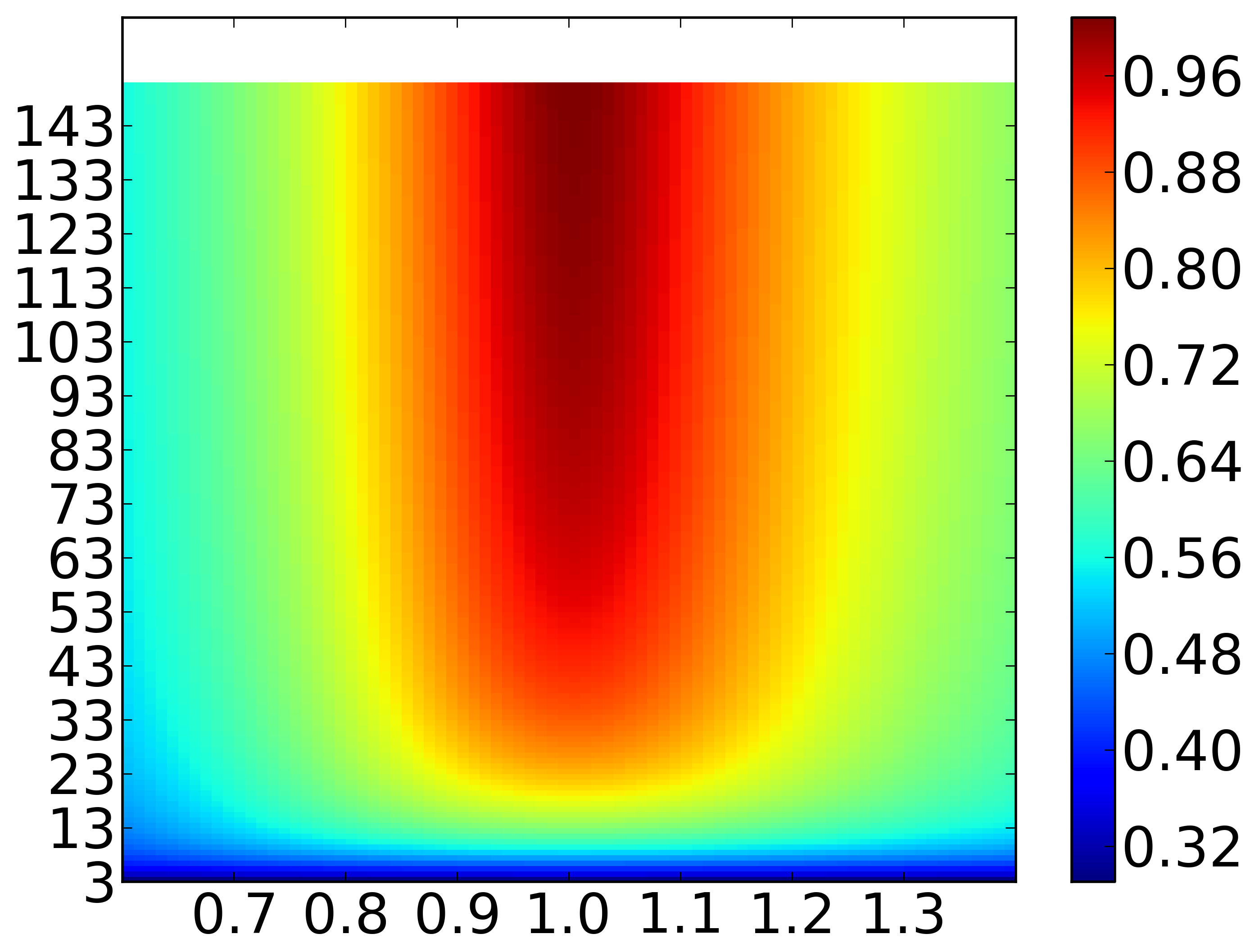}
            \caption{$\mu=0.04$}
        \end{subfigure}
        \begin{subfigure}[b]{0.4\textwidth}
            \centering
            \includegraphics[width=\textwidth]{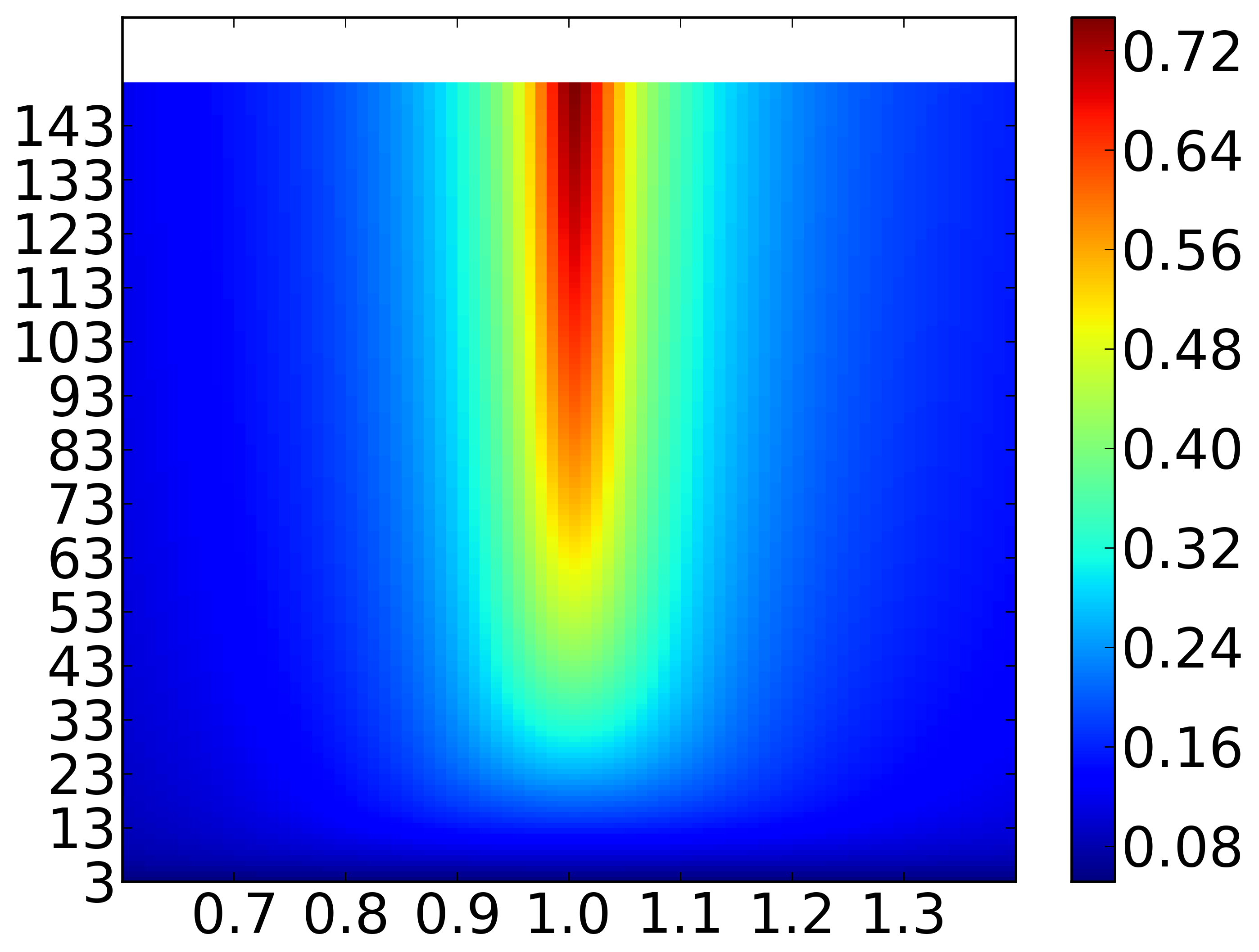}
            \caption{$\mu=0.005$}
        \end{subfigure}        
        \caption{Entropy rate heatmaps for the Moran process, $r \in [0.6, 1.4]$ in the horizontal axis, $N \in [2, 150]$ on the vertical axis. Top row: Boundary mutation regime. Bottom Row: Uniform mutation regime. Though the plots look symmetric horizontally about $r=1$, they are not precisely so. Colorbars are not consistent across plots (for additional resolution). Entropy rates are generally smaller as $\mu$ decreases (left to right). For the top row, the entropy rate is eventually decreasing as $N$ increases; for the bottom row, the entropy rate increases as $N$ increases.}
        \label{figure_moran_constant_fitness}
\end{figure} 
\end{landscape}

For the boundary mutation case, we have that there is evidently least one local maximum for the entropy rate as the population varies (see Figure \ref{figure_1}). This appears to be true more generally. Figure \ref{moran_boundary_maximal_N} plots the population size $N$ that maximizes the entropy rate versus $r$ for various mutation rates. Although drift typically dominates evolution for small populations leading such populations to be thought of as ``more random'', the inherent randomness measured by the entropy rate can be higher for larger populations. This is because higher entropy rate comes from a balance of mutation, selection, and drift that cannot simply be reduced to population size. Indeed, small populations can be subject to so much stochastic variation from drift that they become more predictable in their long run behavior and hence have smaller entropy rates.

\begin{figure}
    \centering
    \includegraphics[width=0.8\textwidth]{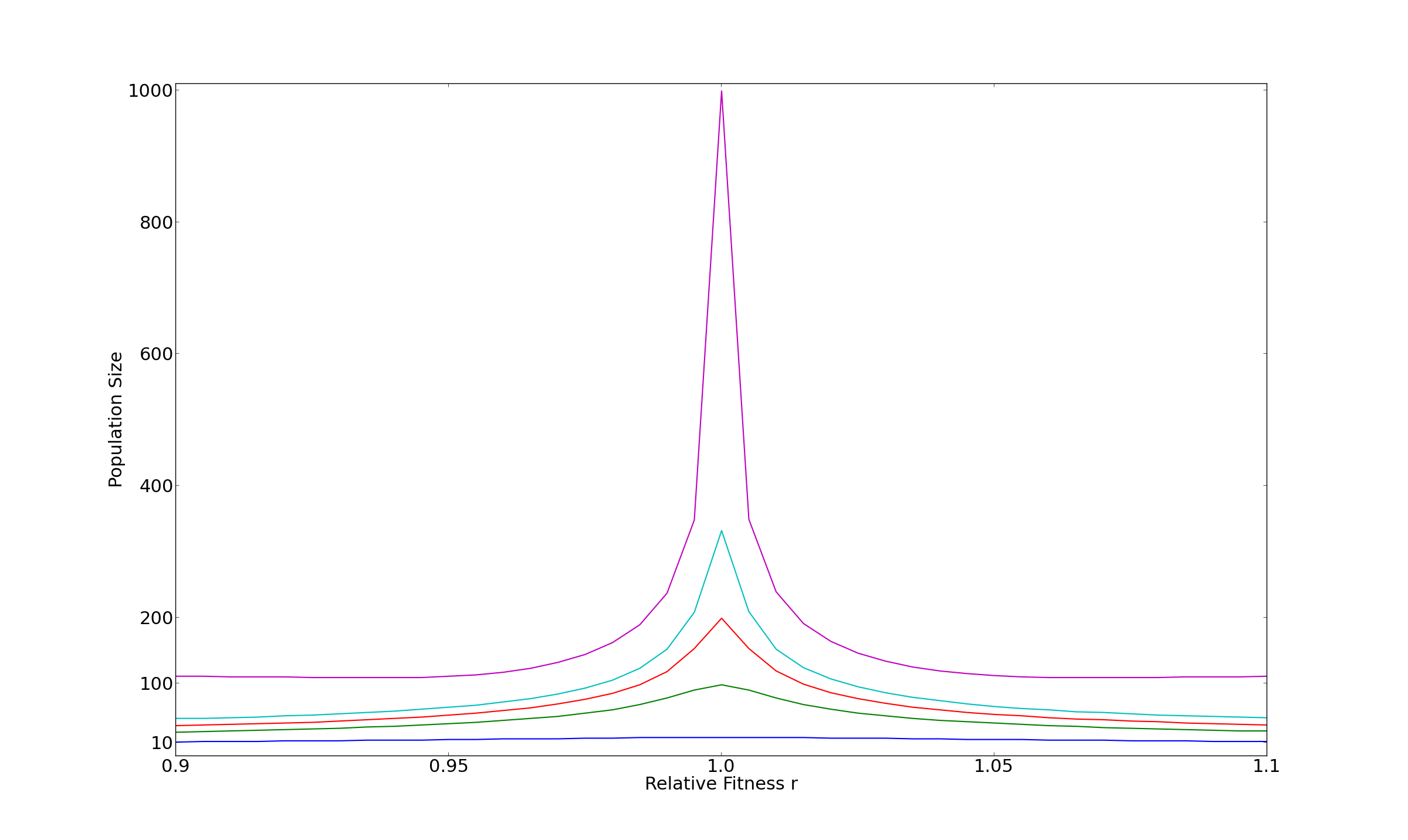}
    \caption{Mutation-drift balance: (Vertical) Population size $N$ for which the entropy rate is maximal versus the relative fitness $r$ for various fixed values of $\mu$ (boundary mutations). From Bottom to Top: $\mu = 0.05, 0.01, 0.005, 0.003, 0.001$. The maximum for $r=1$ appears to occur when $N \mu \approx 1$. The maximum entropy rate may occur for a large population size even though the entropy rate tends to zero as the population size gets large. These curves correspond to the maximum values in Figure \ref{figure_1}.}
    \label{moran_boundary_maximal_N}
\end{figure}

\subsection{Constant Fitness: Wright-Fisher Process}

Computational results for the Wright-Fisher process indicate similar behavior for constant fitness landscapes. Figure \ref{figure_wf_constant_fitness} has heatmaps for similar parameters as Figure \ref{figure_moran_constant_fitness} for the Wright-Fisher process. Interestingly, the Wright-Fisher process appears to be more tightly clustered to $r=1$ for the boundary mutation, and varies more sharply for uniform mutation.

\begin{landscape}
\centering
\begin{figure}[h]
        \begin{subfigure}[b]{0.4\textwidth}
            \centering
            \includegraphics[width=\textwidth]{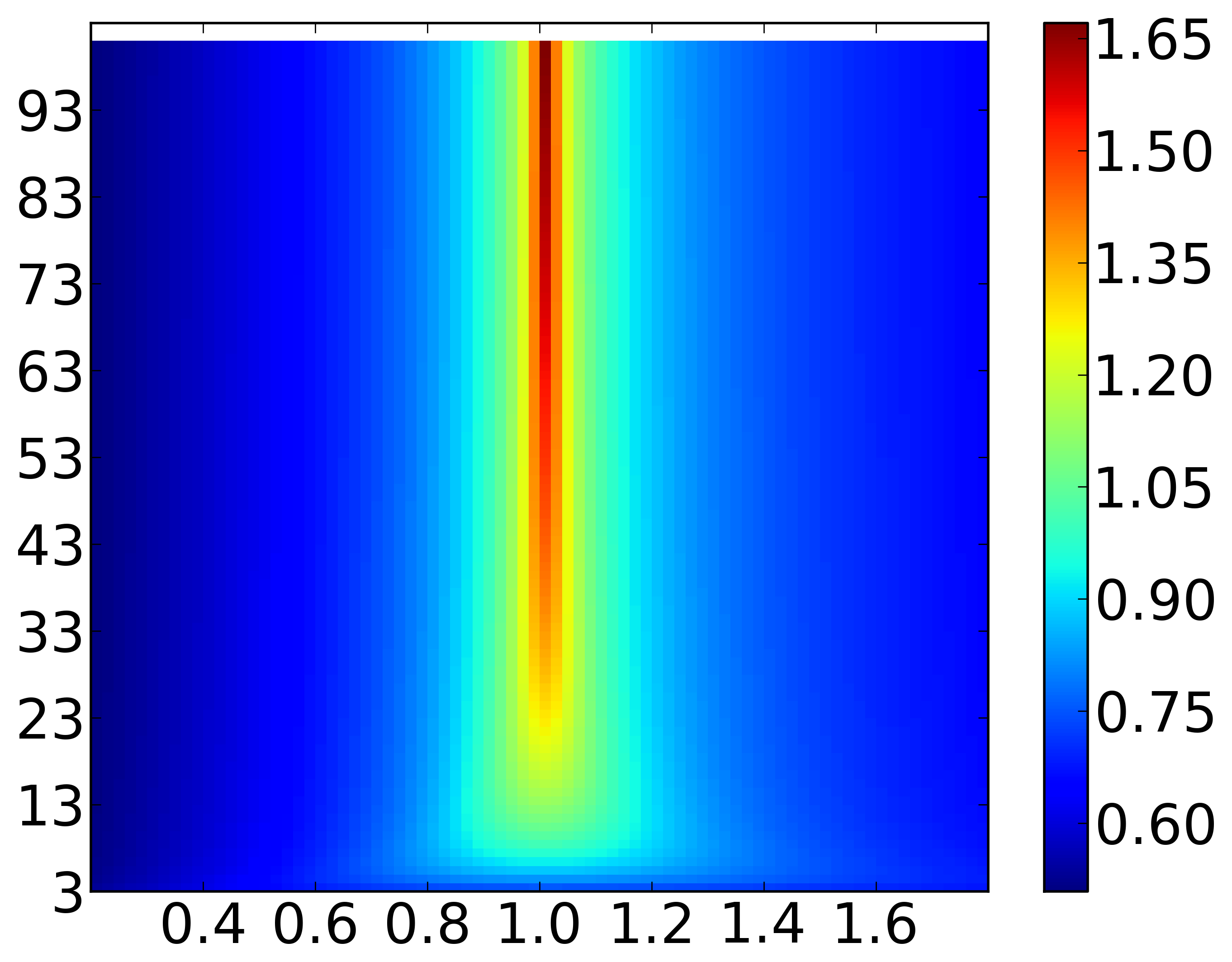}
        \end{subfigure}%
        ~ 
        \begin{subfigure}[b]{0.4\textwidth}
            \centering
            \includegraphics[width=\textwidth]{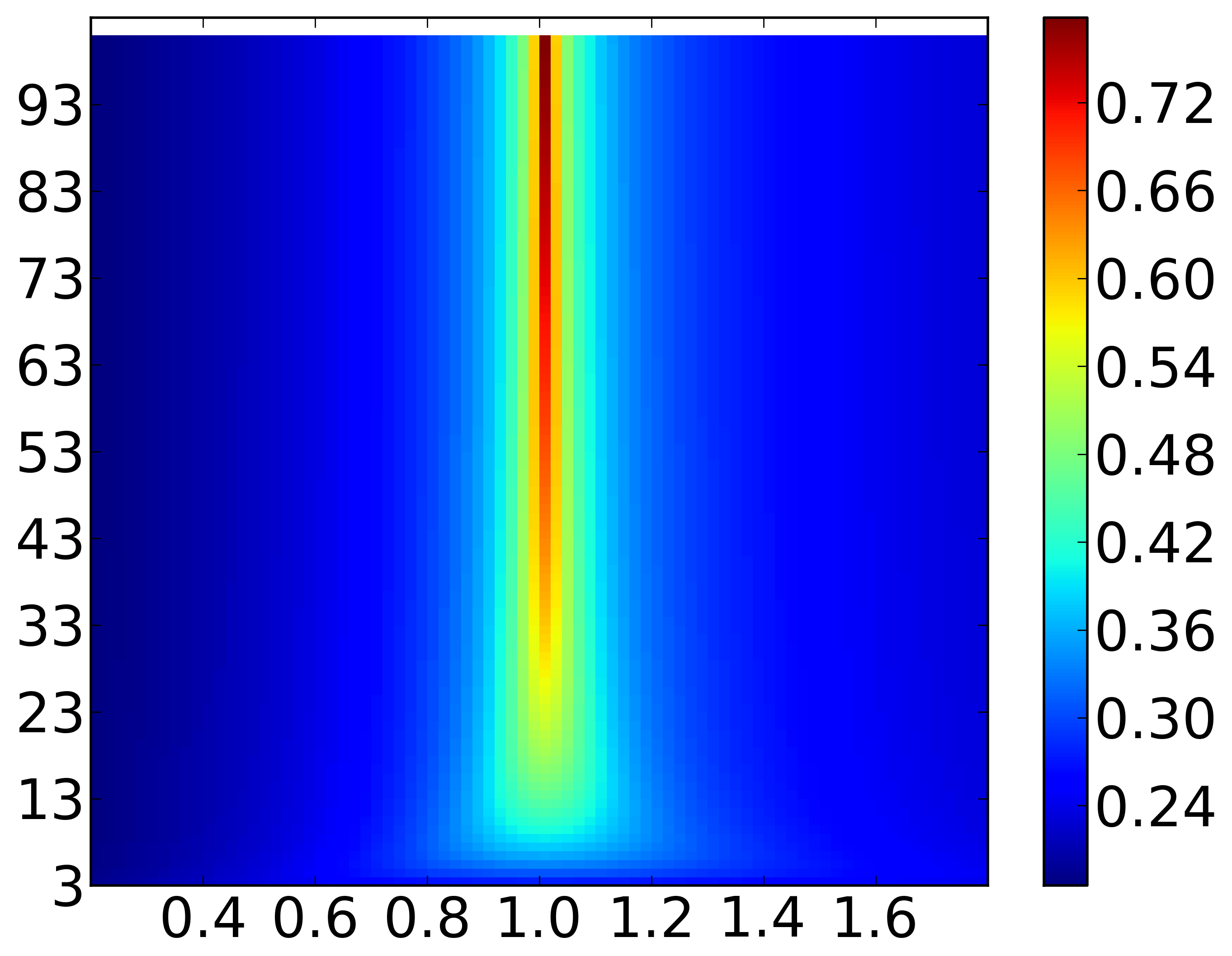}
        \end{subfigure}
        \begin{subfigure}[b]{0.4\textwidth}
            \centering
            \includegraphics[width=\textwidth]{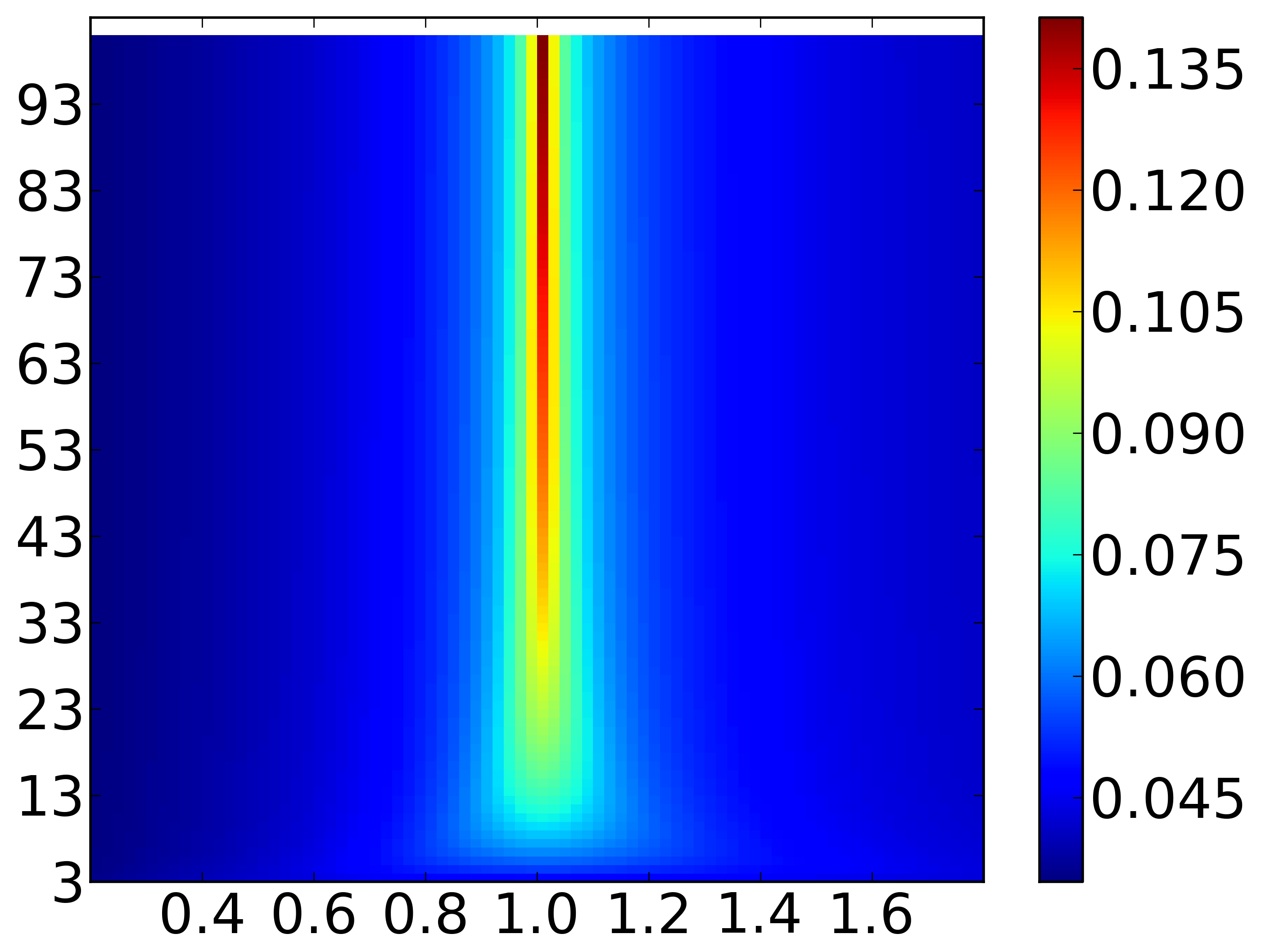}
        \end{subfigure}
        \\
        \begin{subfigure}[b]{0.4\textwidth}
            \centering
            \includegraphics[width=\textwidth]{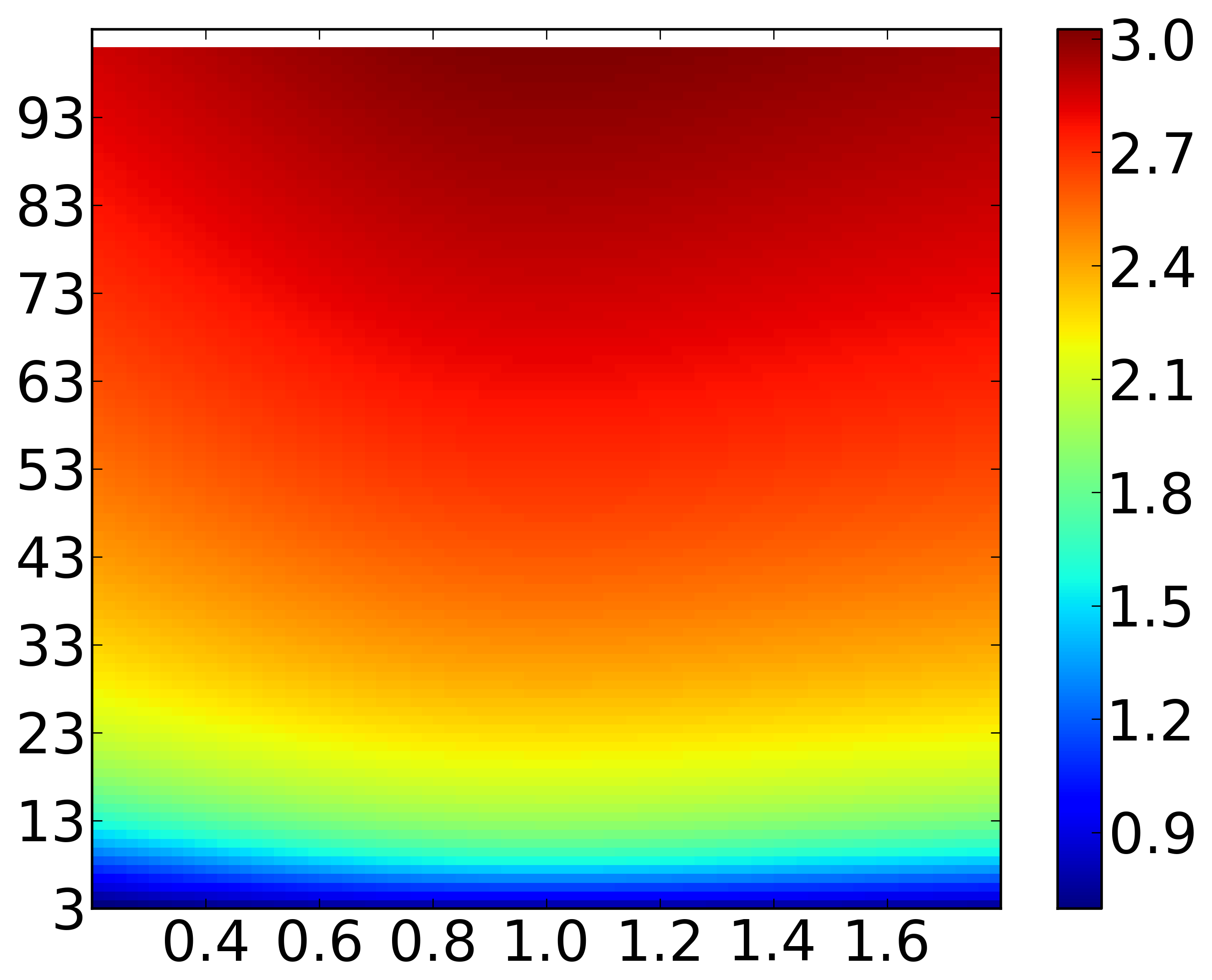}
            \caption{$\mu=0.2$}
        \end{subfigure}%
        ~ 
        \begin{subfigure}[b]{0.4\textwidth}
            \centering
            \includegraphics[width=\textwidth]{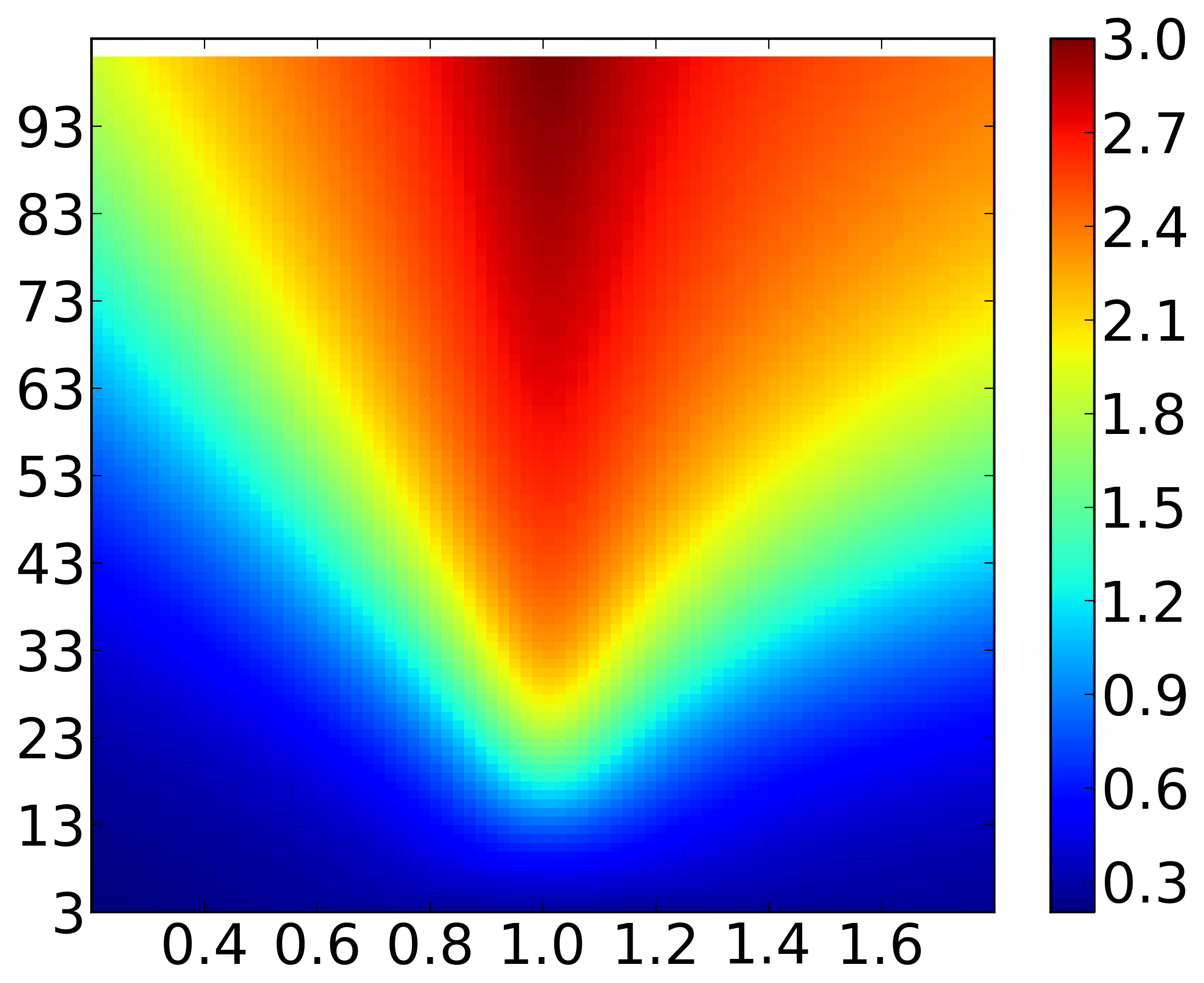}
            \caption{$\mu=0.04$}
        \end{subfigure}
        \begin{subfigure}[b]{0.4\textwidth}
            \centering
            \includegraphics[width=\textwidth]{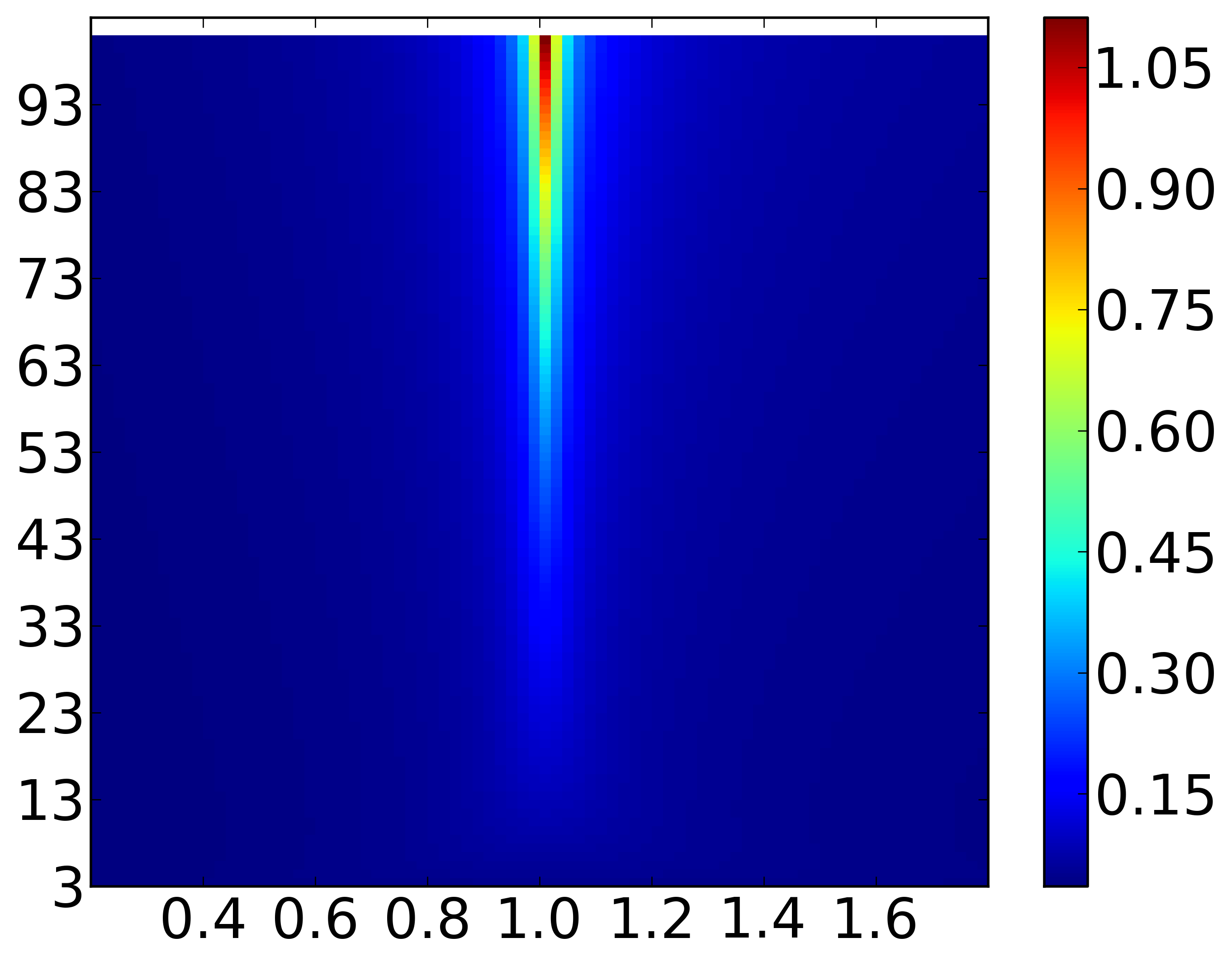}
            \caption{$\mu=0.005$}
        \end{subfigure}        
        \caption{Entropy rate heatmaps for the Wright-Fisher process, $r \in [0.6, 1.4]$ in the horizontal axis, $N \in [2, 100]$ on the vertical axis. Top row: Boundary mutation regime. Bottom Row: Uniform mutation regime. Though the plots look symmetric horizontally about $r=1$, they are not precisely so. Colorbars are not consistent across plots (for additional resolution). Entropy rates are generally smaller as $\mu$ decreases (left to right). For the top row, the entropy rate is eventually decreasing as $N$ increases; for the bottom row, the entropy rate increases as $N$ increases.}
        \label{figure_wf_constant_fitness}
\end{figure} 
\end{landscape}

\section{Entropy Rates of $n$-fold Moran Process}

Transition probabilities of the $n$-fold Moran processes are typically not binomial distributions, even in the generational $k=N$ case. This is because the $N$ individuals are not necessarily chosen from the same population state like the $N$ individuals in the Wright-Fisher process. Depending on the values of the parameters, the entropy rates of the $n$-fold Moran process can be larger or smaller than those of the Wright-Fisher process for large $k$ with all other parameters the same. The same is true for $n$-fold processes for different values of $n$ but other parameters equal, even $n=1$ versus $n=2$. Numerical computations indicate this is heavily dependent on the value of the mutation rate.

\begin{figure}
    \centering
    \includegraphics[width=0.45\textwidth]{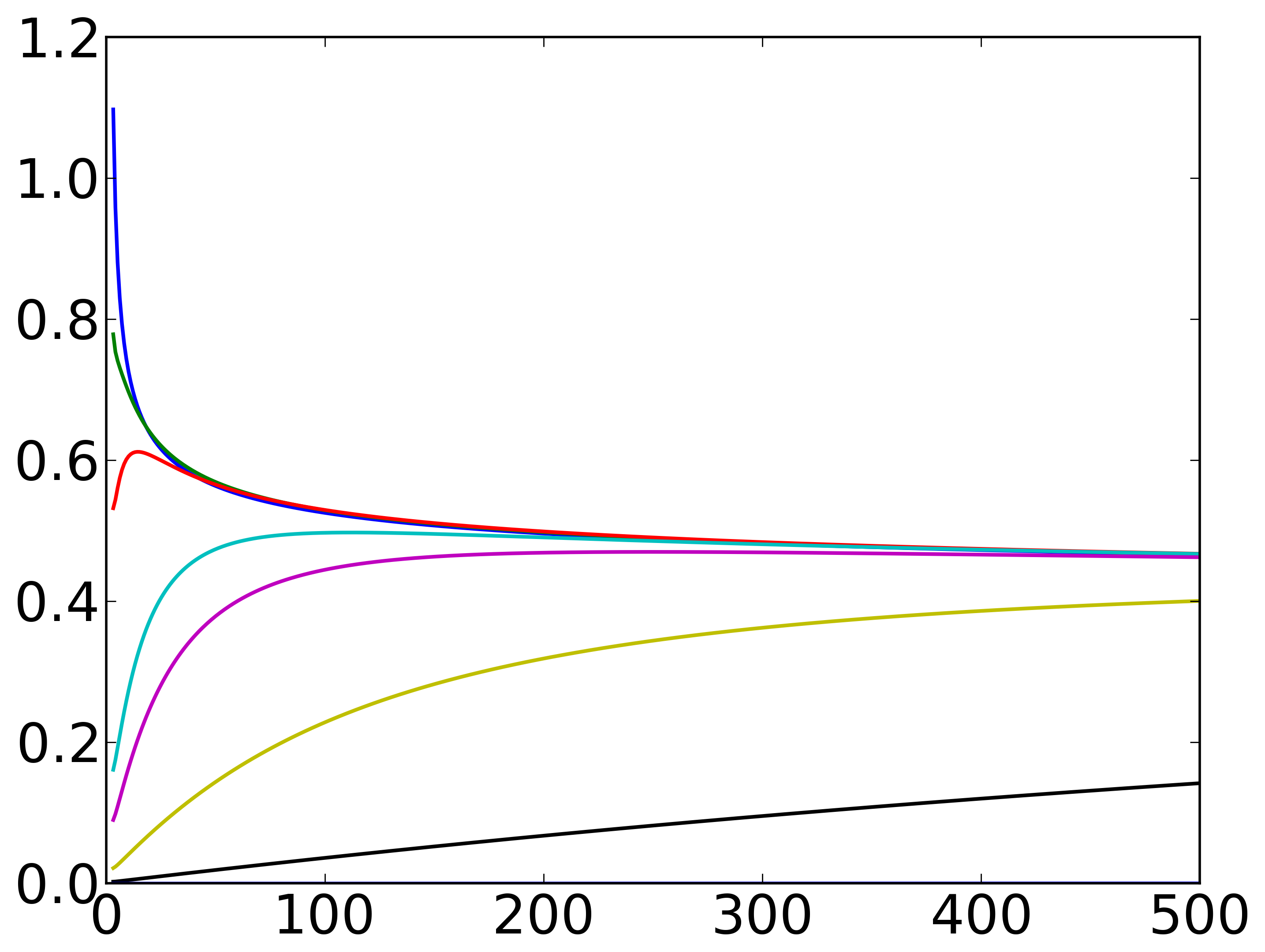}
    \includegraphics[width=0.45\textwidth]{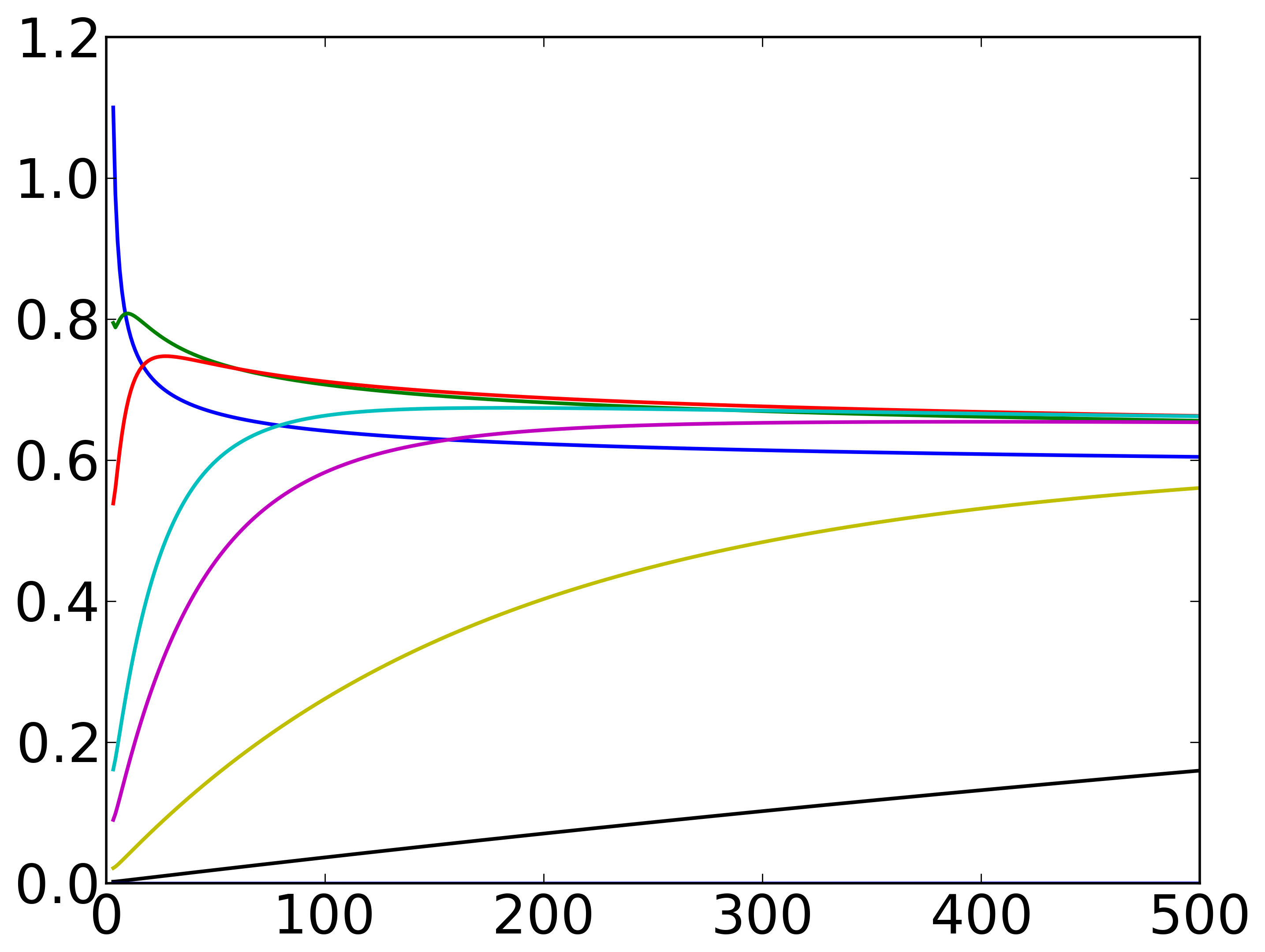}
    \caption{Scaled Entropy Rate vs. Population Size $N$ for $\mu_{AB} = \mu_{BA} \in \{0.5, 0.1, 0.05, 0.01, 0.005, 0.001, 0.0001\}$ (top to bottom) with a neutral fitness landscape for the $n$-fold Moran process. Entropy rates are divided by $\log N$. Left: Mutations only at the boundary states (boundary regime). Right: Mutations for all states (uniform regime). Compare to Figures \ref{figure_1} and \ref{figure_wright_fisher_neutral}.}
    \label{figure_n_fold_neutral}
\end{figure}

Once again the neutral fitness landscape appears to give the maximum value of the entropy rate. This simply because values of $r \neq 1$ will lead to the stationary distribution favoring one fixation state over the other, and lead to less spread out distributions. See Figure \ref{figure_n_fold_constant_fitness} and compare to Figures \ref{figure_moran_constant_fitness} and \ref{figure_wf_constant_fitness}. The entropy rates for the $N$-fold Moran processes are qualitatively similar to both the Moran process and the Wright-Fisher process. For $n \approx N / 2$, the entropy rates are very similar to those in Figure \ref{figure_n_fold_constant_fitness}.

\begin{landscape}
\centering
\begin{figure}[h]
        \begin{subfigure}[b]{0.4\textwidth}
            \centering
            \includegraphics[width=\textwidth]{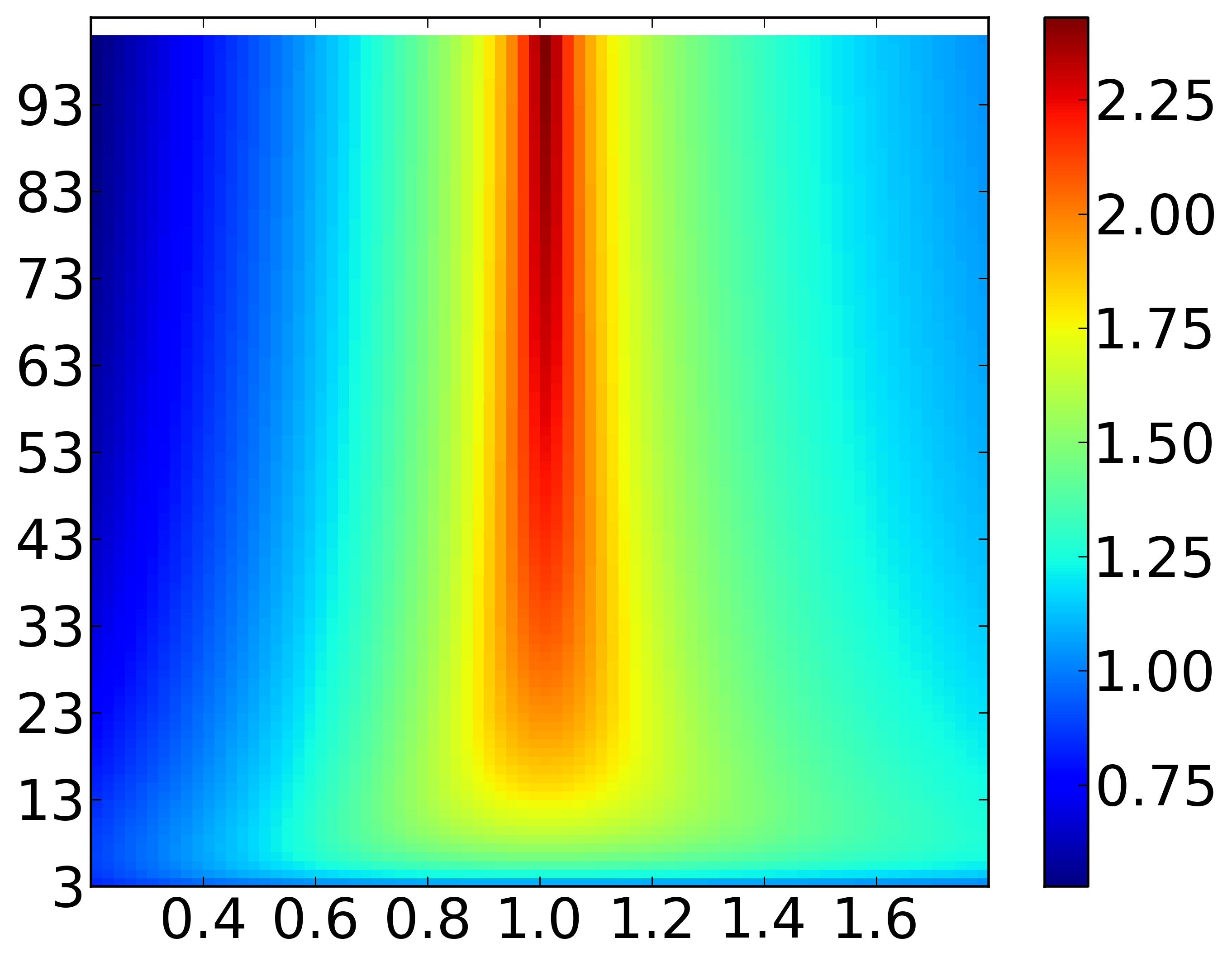}
        \end{subfigure}%
        ~ 
        \begin{subfigure}[b]{0.4\textwidth}
            \centering
            \includegraphics[width=\textwidth]{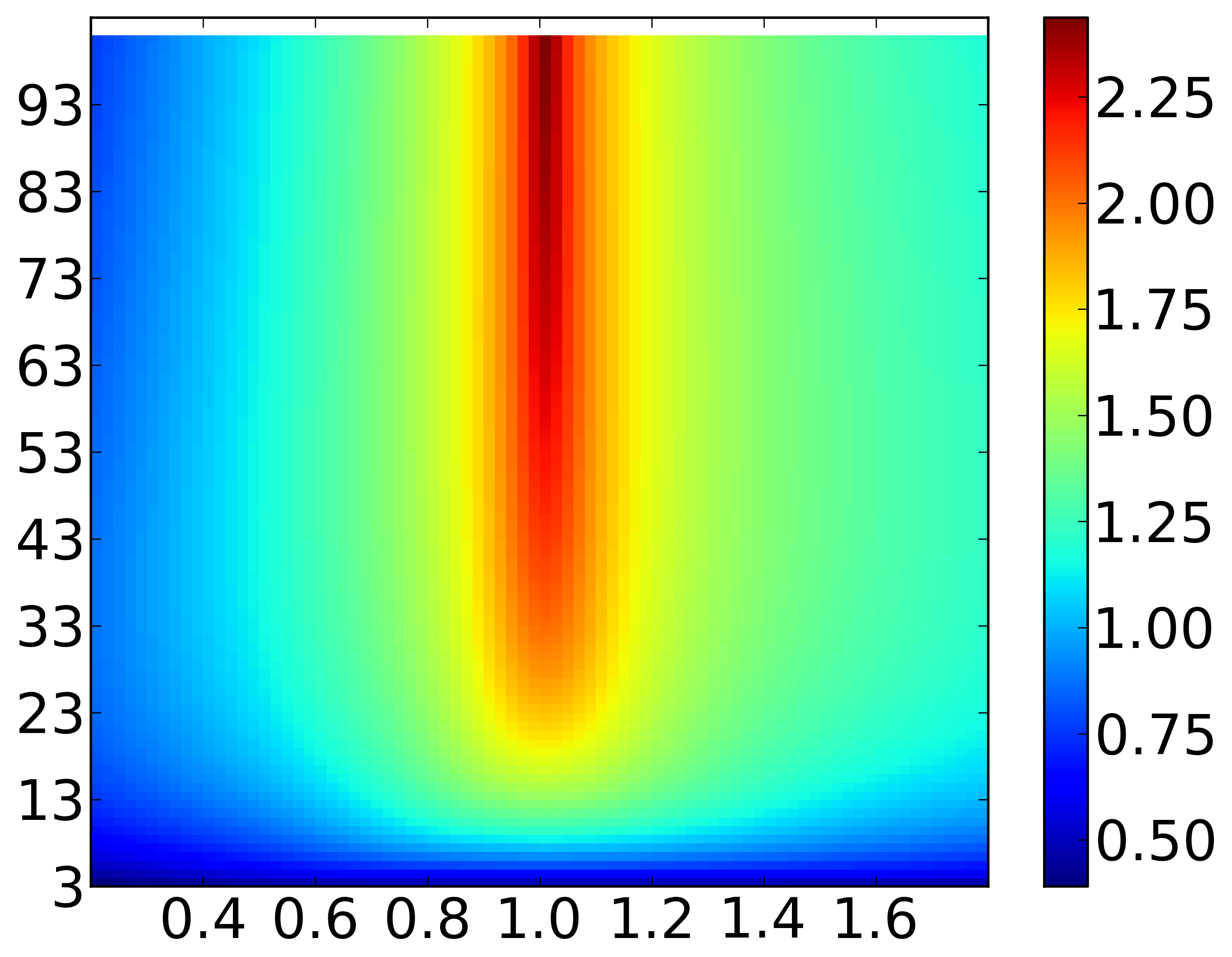}
        \end{subfigure}
        \begin{subfigure}[b]{0.4\textwidth}
            \centering
            \includegraphics[width=\textwidth]{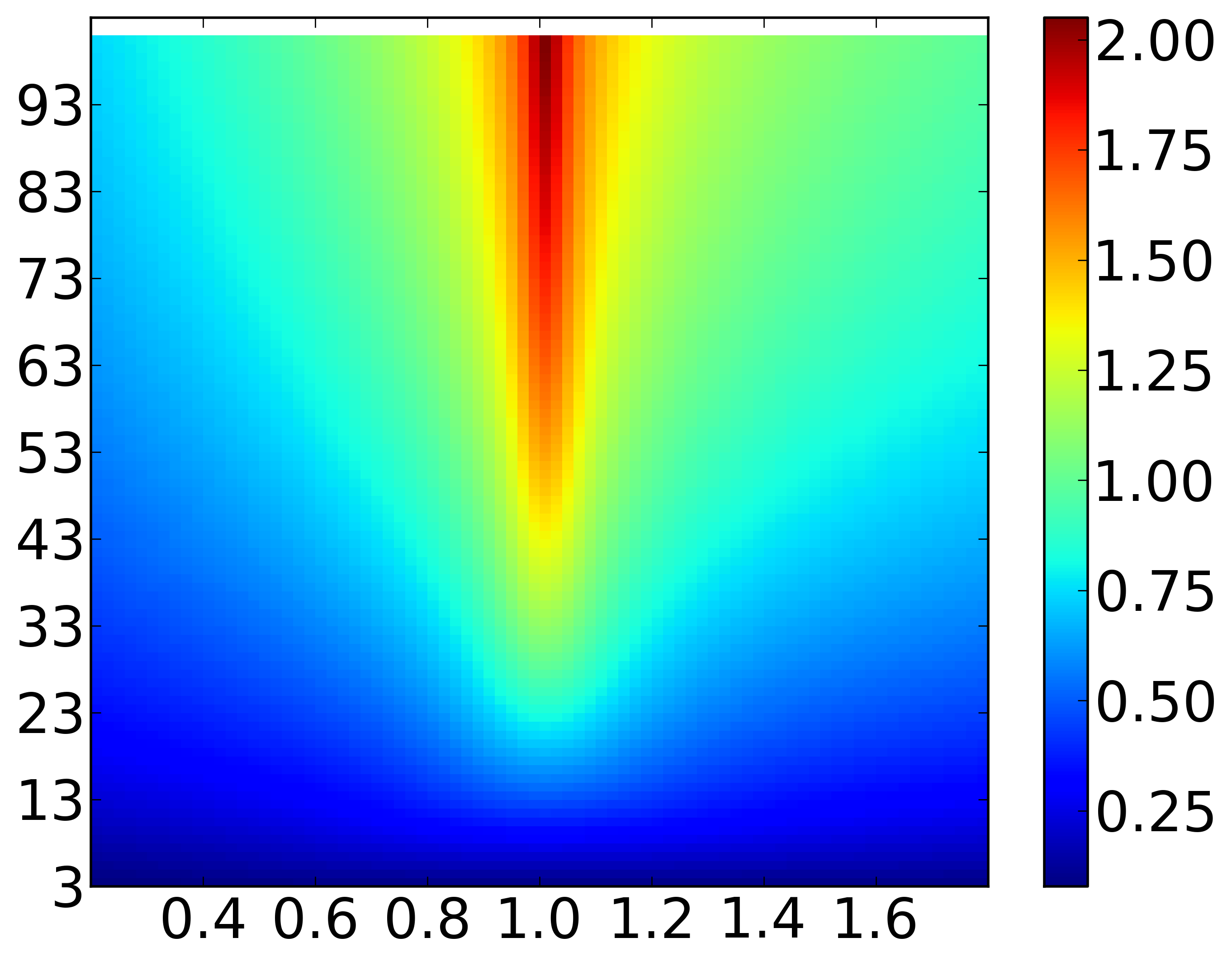}
        \end{subfigure}
        \\
        \begin{subfigure}[b]{0.4\textwidth}
            \centering
            \includegraphics[width=\textwidth]{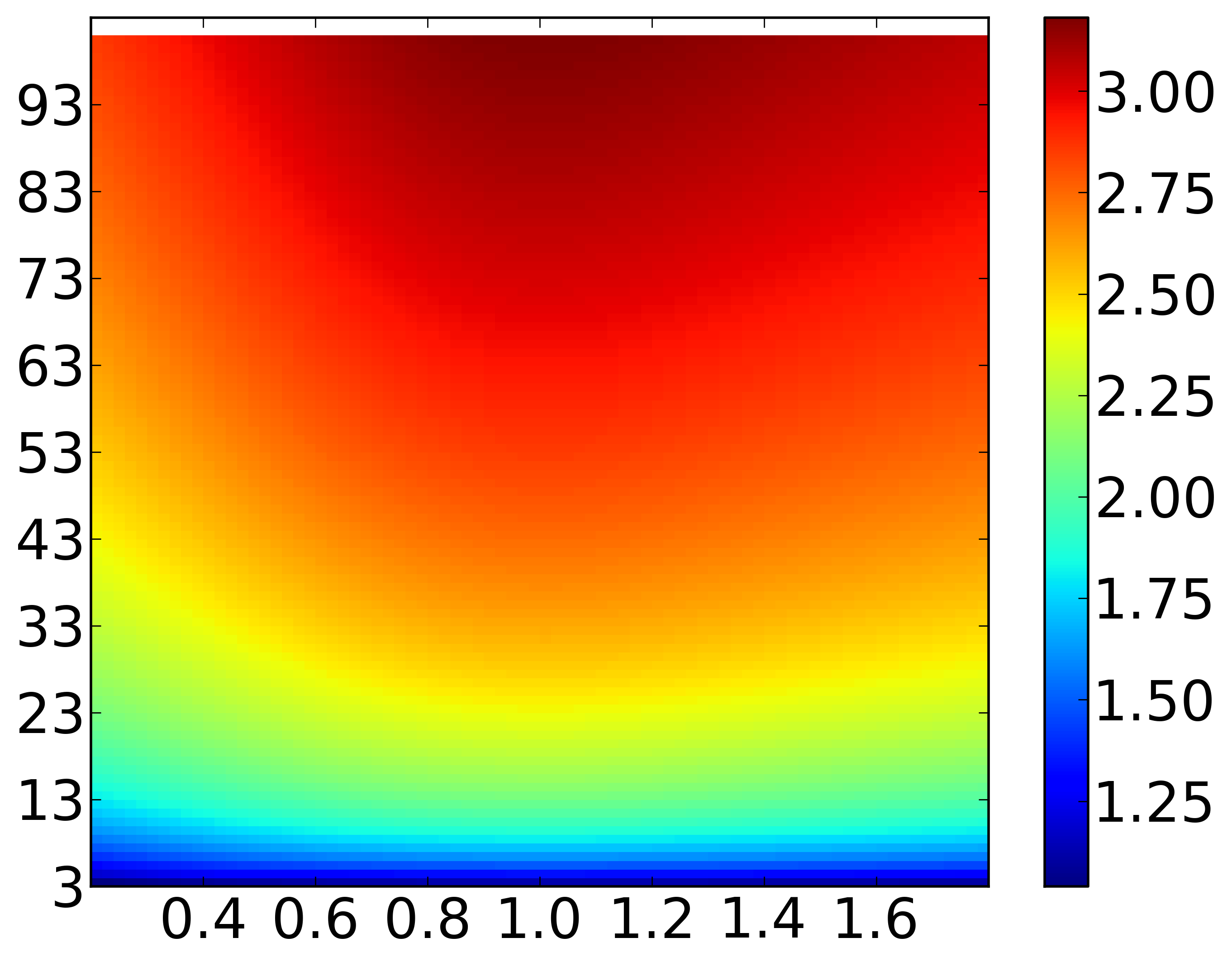}
            \caption{$\mu=0.2$}
        \end{subfigure}%
        ~ 
        \begin{subfigure}[b]{0.4\textwidth}
            \centering
            \includegraphics[width=\textwidth]{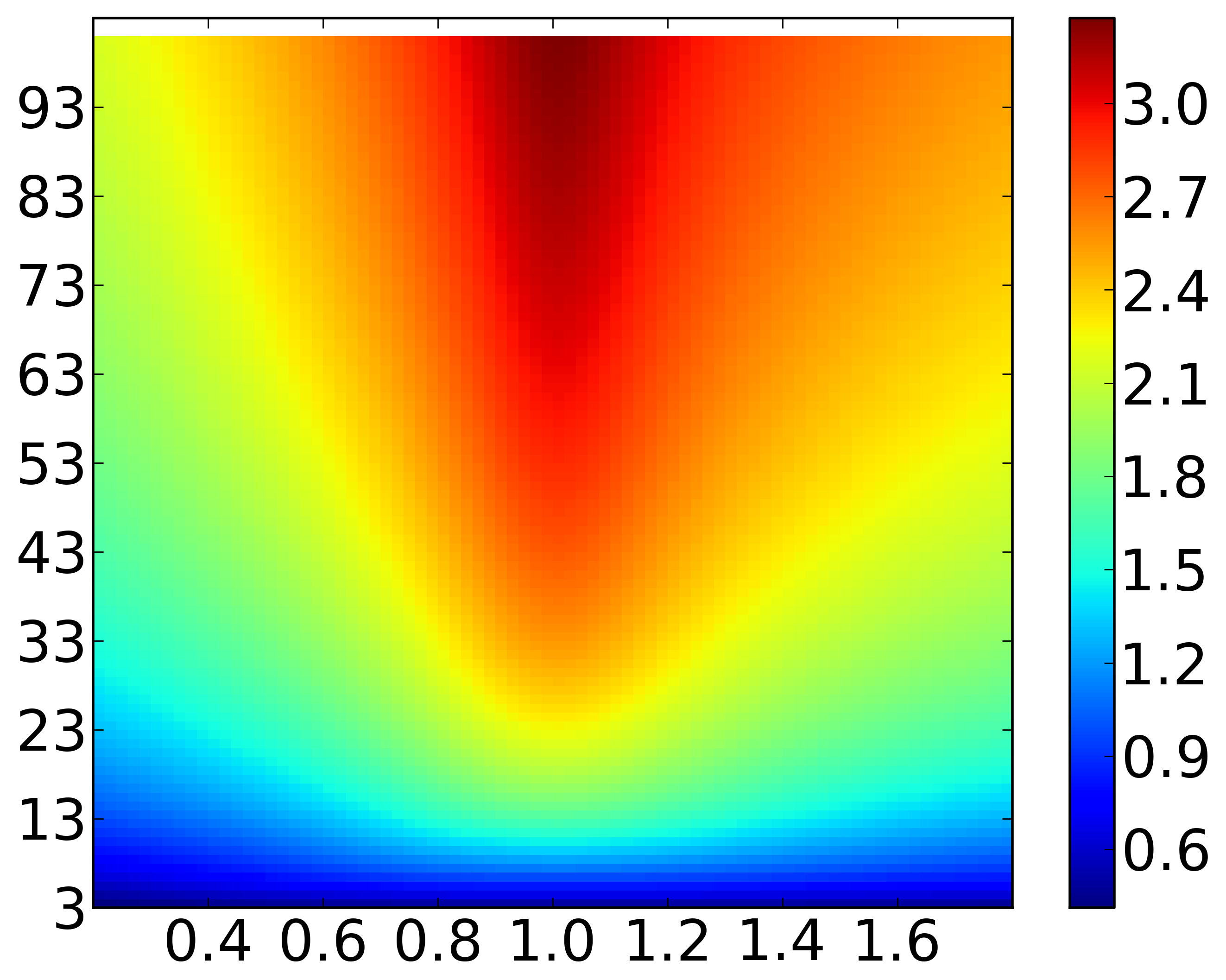}
            \caption{$\mu=0.04$}
        \end{subfigure}
        \begin{subfigure}[b]{0.4\textwidth}
            \centering
            \includegraphics[width=\textwidth]{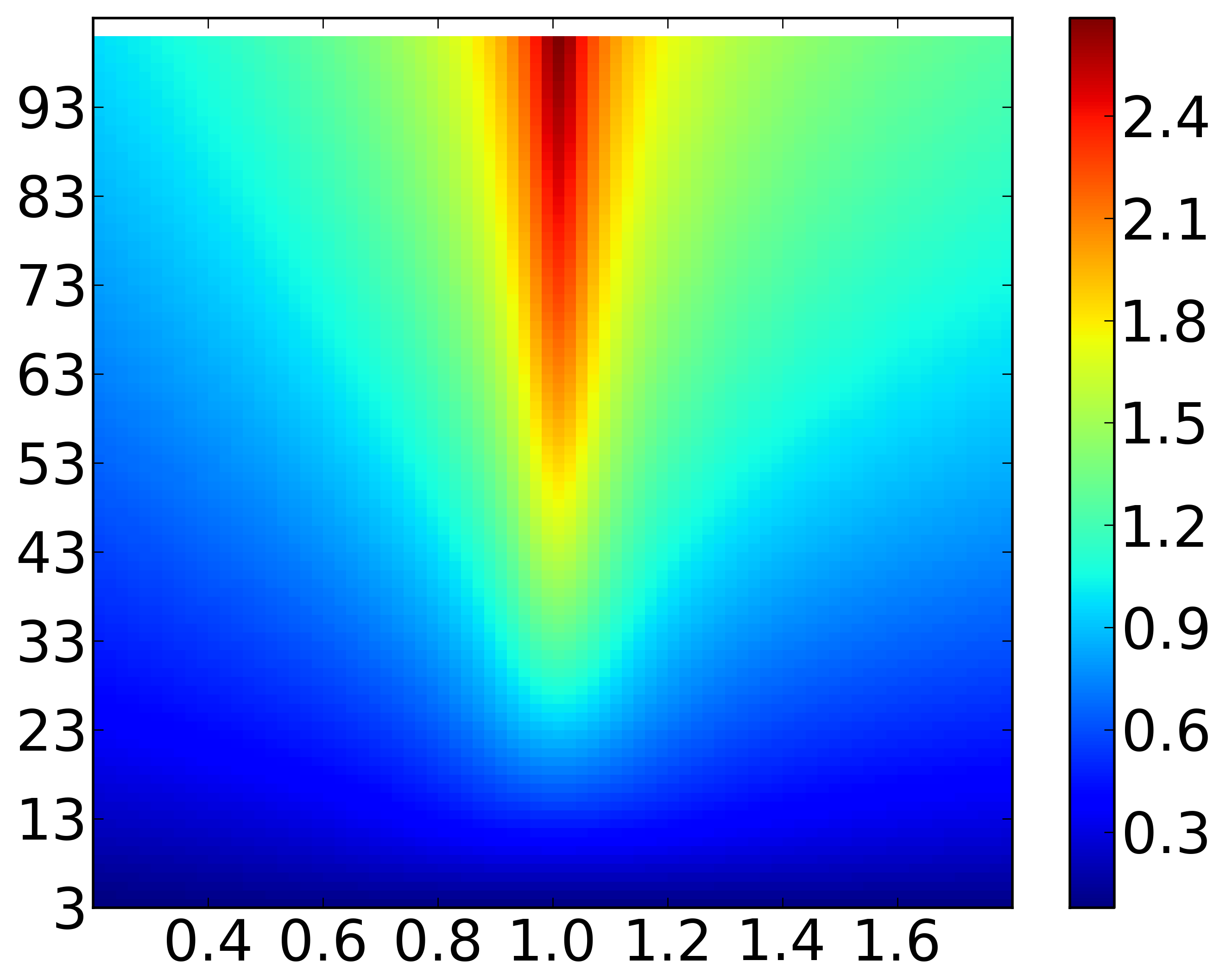}
            \caption{$\mu=0.005$}
        \end{subfigure}        
        \caption{Entropy rate heatmaps for the $N$-fold Moran process, $r \in [0.4, 1.6]$ in the horizontal axis, $N \in [2, 100]$ on the vertical axis. Top row: Boundary mutation regime. Bottom Row: Uniform mutation regime. Colorbars are not consistent across plots (for additional resolution). Entropy rates are generally smaller as $\mu$ decreases (left to right). For the top row, the entropy rate is eventually decreasing as $N$ increases; for the bottom row, the entropy rate increases as $N$ increases. Compare to Figures \ref{figure_moran_constant_fitness} and \ref{figure_wf_constant_fitness}.}
        \label{figure_n_fold_constant_fitness}
\end{figure} 
\end{landscape}

\section{Other Common Game Matrices}

There are three standard fitness landscapes generated by 2x2 game matrices \cite{cressman2003evolutionary} \cite{hofbauer2003evolutionary}. See \cite{claussen2005non} for discussions of the stationary distribution for each case. Let us consider a game commonly referred to as the Hawk-Dove or Anti-coordination game, which has an interior evolutionarily stable state for continuous dynamics. The game is given by the game matrix $a=d<b=c$, and in the case where the game matrix is $a=0=d, b=1=c$ and for both mutation regimes the stationary distribution is given by 
\begin{align*}
 s_0 = s_N &= \frac{1}{2 + 2 \mu (2^N - 2)}\\
 s_j &= \frac{2 \mu}{2 + 2 \mu (2^N - 2)} \binom{N}{j}
\end{align*}
which gives another example of a nontrivial connection between $N$ and $\mu$. For this process, the denominator has a term with $\mu 2^N$ rather than $\mu N \log N$ for the neutral landscape. As such, this process is much more robust for even moderate $N$ because the factor of $2^N$ dominates the behavior of the boundary states. In other words, the stationary distribution stabilizes as $N$ increases very quickly as compared to the neutral landscape because the $2^N$ term dwarfs the other parameter contributions. Hence in general the interaction between $N$ and $\mu$ differs significantly depending on the fitness landscape. This means that taking a limit $(\mu, N) \to (0, \infty)$ to eliminate the effect of both drift and mutation is determined by the functional relationship between $\mu$ and $N$. Moreover, there may be significant implications for the commonly applied assumptions to methods like molecular clocks and the neutral theory of evolution -- the rate of evolution depends on selection, mutation, and drift. For $\mu=1/2$, this is the same distribution as in equation \ref{stationary_mu_one_half} for which the entropy rate is as given above. In particular, this shows that even for situations that would be regarded as \emph{evolutionarily stable}, there can still be a significant amount of variation in the long-run behavior of the process in a finite population, and this phenomenon is captured by the entropy rate. Moreover, despite the large mutation rate $\mu=1/2$, the stationary distribution is still strongly centered on the center state (the distribution is binomial \cite{claussen2005non} and has standard deviation less than 10 for $N=100$, indicating that conventional wisdom regarding evolutionary instability due to large mutation rates is not universal. The dependence of the stationary distribution on mutation rate is dominated by the exponential dependence on population size.

Another common game type is the prisoner's dilemma. Claussen and Traulsen consider such game having a Nash equilibrium at the state $i=0$, defined by $a=3, b=0, c=5, d=1$ \cite{claussen2005non}. They also note that $T_{1 \to 2} = 0$ if self-interaction is not allowed, and assume a small mutation rate from state 1 to 2, which has to be done for the boundary mutation regime to have a stationary distribution. For the uniform regime, $T_{1 \to 2} \neq 0$ and there is a unique stationary state. This example illustrates the classic mutation-selection balance. For small mutation probabilities, the stationary distribution concentrates at $i=0$, but for larger rates the stationary distribution moves away from the state $i=0$, with both types ``surviving''. This is also a case in which the uniform regime is both mathematically and realistically superior.

\section{Discussion}

The entropy rate of both the Moran process and the Wright-Fisher process are bounded and less than the theoretical maximum entropy rate attainable by a Markov process on $N+1$ states, where $N$ is the population size. The bound for the Wright-Fisher process depends on the population size and is approximately one-half the maximum theoretical value for large populations. For the Moran process, the bound is independent of the population size and is a much larger fraction of the theoretical maximum (94.6\%). These results imply that the inherent randomness of evolutionary processes, in so far as they are modeled by the Moran process and the Wright-Fisher process, are fundamentally bounded for fixed population sizes. Moreover, as the proof in the appendix shows, the bound for the Moran process is a consequence of fitness proportionate selection. This means that either evolutionary processes are fundamentally ordered to some extent or that the processes considered here are not accurate models of evolutionary processes. Given the many applications of these models \cite{david9stochastic} \cite{traulsen2009stochastic} \cite{ohtsuki2007one}, this work concludes that there is both order and randomness in these processes that is characterized at least in part by the entropy rate.

Intuitively, for all the processes considered, the entropy rate is maximal for neutral fitness landscapes; nevertheless, the highest entropy rate can occur for large populations when the other parameters are held constant, indicating sources of randomness in long-run population behavior can overcome those due to small population size. Though there are multiple approaches to mutation in the literature for the Moran process, the two approaches prominently discussed in this manuscript share the property that the entropy rate tends to zero as the mutation rate tends to zero. Mutation adds diversity to evolving populations, so it is also intuitive that the inherent randomness of population states is strongly dependent on mutation rates. Nevertheless, there can be very different relationships between the mutation rate and population size. As we have seen in cases where explicit calculations are possible, these parameters can interact directly or in a more complex manner, with the population size dominating the behavior in some cases and the mutation rate in others. Both interact significantly with the fitness landscape.

Finally, let us consider the meaning of \emph{inherent randomness} as an interpretation of entropy rate in this context. Processes with entropy rate approaching zero are those that fixate and occupy few states with significant probability. Relatively large entropy rates can occur from flat landscapes and well-spread stationary distributions, or tight coupling between the states with high transition entropy and stationary distribution occupation. Hence randomness can be the result of movement between many population states and more frequent movement between a smaller number of states. The distributions around the most stable states \cite{claussen2005non}, in terms of the stability theory of evolutionary games, can have a significant impact on the entropy rate. Typically the entropy rates for the Wright-Fisher and $N$-fold Moran process are similar, much closer in value, and larger, than the entropy rate of the Moran process, which is an intuitively ``less random'' process, consisting of many incremental shifts rather than generational sampling.

\subsection*{Methods}
All computations were performed with python code available at \url{https://github.com/marcharper/entropy_rate}. All plots created with \emph{matplotlib} \cite{Hunter:2007}.

\subsection*{Acknowledgments}

This research was supported by the Office of Science (BER), U. S. Department of Energy, Cooperative Agreement No. DE-FC02-02ER63421. The author thanks Christopher Strelioff for useful discussion on earlier versions of this manuscript.

\section{Appendix}

\begin{proof}[Proof of Theorem 1]
The entropy rate can be written as $E = s_0 H((\mu, 1-\mu)) + s_N H((k \mu, 1-k\mu)) + \sum_{i=1}^{N-1}{ s_i{H(T_i)}}$. As $\mu \to 0$, the first two terms converge to zero since $H((0,1)) = 0$ and $s_0 + s_N \to 1$, and the sum converges to zero since $s_i \to 0$ for $i \neq 0, n$. The latter holds because the transition probabilities depend at most linearly on $\mu$, and so $H(T_i)$ cannot prevent $s_i H(T_i)$ from converging to zero for $0 < i < N$ as $\mu \to 0$.
\end{proof}

\begin{proof}[Proof of Theorem 2]
The proof is essentially the same as for Theorem 1, except now we can argue that for $i \neq 0, N$, the stationary probabilities $s_i$ (Equation \ref{s_j}) have an additional factor of $\mu$ versus $s_0$ and $s_N$ because $T_{1 \to 0} = \mu$ and $T_{N \to N-1} = k \mu$. Hence as before, $s_0 + s_N \to 1$ and $s_1 + \ldots s_{N-1} \to 0$ as $\mu \to 0$. Equations \ref{s_j} and \ref{s_0} imply the same fixation probabilities as the uniform mutation case \cite{fudenberg2004stochastic}.
\end{proof}

To prove the maximum entropy rate for the Moran process, we first start with a generalization. Replace $i f_A(i)$ by $\incentive_A (i)$ and $(N-i) f_B(i)$ by $\incentive_B (i)$ to get the incentive dynamic in a finite population \cite{fryer2012existence}:

\begin{align}\label{incentive_process}
T_{i \to i+1} &= \frac{\incentive_A(i) (1 - \mu_{AB}(i)) + \incentive_B(i) \mu_{BA}(N-i)}{\incentive_A + \incentive_B} \frac{N-i}{N} \notag \\
T_{i \to i-1} &= \frac{\incentive_A(i) \mu_{AB}(i) + \incentive_B(i) (1 - \mu_{BA}(N-i))}{\incentive_A + \incentive_B} \frac{i}{N} \\
T_{i \to i} &= 1 - T_{i \to i+1} - T_{i \to i-1} \notag
\end{align}
For particular choices of incentive function, one can replace the replicator incentive with that corresponding to another evolutionary dynamic, such as the incentives for the best reply, logit, Fermi, or other incentive.

\begin{theorem}
For the incentive dynamics process defined above, the maximum entropy rate is \ds{\frac{3}{2} \log 2}. 
\end{theorem}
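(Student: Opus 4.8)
The plan is to bound each transition entropy individually. Since $H(P)=\sum_{i}s_i\,H(T_i)$ is a convex combination of the numbers $H(T_i)$ with nonnegative weights $s_i$ summing to $1$, it is enough to show $H(T_i)\le\tfrac32\log2$ for every state $i$; the conclusion $H(P)\le\tfrac32\log2$ then follows at once, and the neutral-landscape examples discussed earlier (for instance $\mu_{AB}=\mu_{BA}=\tfrac12$ with $N\to\infty$) show that no smaller bound works, so $\tfrac32\log2$ is the supremal value claimed. For the (non-absorbing) boundary states $i=0$ and $i=N$ the distribution $T_i$ is supported on two points, hence $H(T_i)\le\log2<\tfrac32\log2$ and nothing is required.

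For an interior state $0<i<N$ I would pass to two reduced parameters: $x=(N-i)/N\in(0,1)$, and
\[
u \;=\; \frac{\incentive_A(i)\bigl(1-\mu_{AB}(i)\bigr)+\incentive_B(i)\,\mu_{BA}(N-i)}{\incentive_A(i)+\incentive_B(i)}\ \in[0,1],
\]
the effective probability that the newly produced individual is of type $A$. The key algebraic observation is that the companion expression governing type $B$ is exactly $1-u$, because the two numerators add up to $\incentive_A(i)+\incentive_B(i)$. Hence, writing $p=T_{i\to i+1}=ux$, $q=T_{i\to i-1}=(1-u)(1-x)$ and $r=T_{i\to i}=1-p-q=u+x-2ux$, the transition entropy becomes
\[
H(T_i)=\Phi(u,x):=H\bigl(ux,\;(1-u)(1-x),\;u+x-2ux\bigr),
\]
a function on the square $[0,1]^2$ that no longer mentions $N$, the incentive, the fitness landscape, or the mutation rates. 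It therefore suffices to prove $\max_{[0,1]^2}\Phi=\tfrac32\log2$, attained at $(u,x)=(\tfrac12,\tfrac12)$, where $(q,r,p)=(\tfrac14,\tfrac12,\tfrac14)$ and $\Phi=\tfrac32\log2$.

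On the boundary of the square one of $p,q$ vanishes, so $\Phi$ collapses to a binary entropy and is $\le\log2$. For interior critical points I would set $\partial_u\Phi=\partial_x\Phi=0$; a short computation gives the pair $x\log p-(1-x)\log q+(1-2x)\log r=0$ and $u\log p-(1-u)\log q+(1-2u)\log r=0$, whose difference factors as $(x-u)\bigl(\log p+\log q-2\log r\bigr)=0$. On the branch $pq=r^2$, substituting $\log r=\tfrac12(\log p+\log q)$ into either equation forces $p=q$, hence $p=q=r=\tfrac13$; but $ux=\tfrac13$ together with $(1-u)(1-x)=\tfrac13$ gives $u+x=1$ and $ux=\tfrac13$, which has no real solution, so this branch is empty. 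On the branch $x=u$ one has $p=u^2$, $q=(1-u)^2$, $r=2u(1-u)$, and the stationarity equation simplifies to $\tfrac{u}{1-u}=2^{\,2u-1}$; the function $\phi(u)=\log\tfrac{u}{1-u}-(2u-1)\log2$ satisfies $\phi(\tfrac12)=0$ and $\phi'(u)=\tfrac1{u(1-u)}-2\log2\ge 4-\log4>0$ on $(0,1)$, so $u=\tfrac12$ is its unique zero. Thus $(\tfrac12,\tfrac12)$ is the only interior critical point, with $\Phi=\tfrac32\log2$, and since this exceeds the boundary values, continuity and compactness of $[0,1]^2$ make it the global maximum.

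The only genuinely non-mechanical part of this argument is the critical-point analysis of $\Phi$: coaxing the two partial derivatives into the factored form above, and then disposing of the diagonal equation via the monotonicity of $\phi$ (where the crude estimate $u(1-u)\le\tfrac14$ is what keeps the last step painless). Everything else is routine bookkeeping, the points to watch being the separate (trivial) treatment of the boundary states $i=0,N$, the identity making the type-$B$ factor equal to $1-u$, and the remark that although $\tfrac32\log2$ is the least upper bound it is only approached, never attained, by an honest process, since equality would force all stationary mass onto a single central state.
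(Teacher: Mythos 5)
Your proposal is correct and rests on the same structural insight as the paper's proof: the interior transition probabilities factor as a component-wise product of two binary distributions, $T_{i\to i+1}=ux$ and $T_{i\to i-1}=(1-u)(1-x)$ with $x=(N-i)/N$ and $u$ the effective offspring-type probability (the numerators summing to $\incentive_A+\incentive_B$ is exactly the identity the paper uses), so everything reduces to maximizing $\Phi(u,x)=H\bigl(ux,(1-u)(1-x),u+x-2ux\bigr)$ on $[0,1]^2$. Where you differ is in how that maximization is executed: the paper argues somewhat informally that the first two entropy terms are maximized along $x=y$ and then optimizes the resulting one-variable function, whereas you run a full critical-point analysis, factor the difference of the two stationarity equations as $(x-u)(\log p+\log q-2\log r)=0$, rule out the $pq=r^2$ branch by showing it would force $p=q=r=\tfrac13$ with no real $(u,x)$, and dispose of the diagonal branch via monotonicity of $\log\tfrac{u}{1-u}-(2u-1)\log 2$. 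Your version is the more rigorous of the two — it closes the gap in the paper's claim that the two-term expression is maximal at $x=y$ for every fixed $x+y$ — and your closing observations (the boundary states contribute at most $\log 2$, and the bound is a supremum never attained since the irreducible chain always puts positive mass on the boundary) are correct additions not made explicit in the paper.
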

\begin{proof}
For the boundary mutation regime, $T_{i \to i+1}$ and $T_{i \to i-1}$ are the result of multiplying two probability distributions component-wise, namely $(\frac{\incentive_A}{\incentive_A + \incentive_B}, \frac{\incentive_B}{\incentive_A + \incentive_B})$ and $(\frac{N-i}{N}, \frac{i}{N})$. Because of this internal relationship, the entropy rate is bounded lower than the theoretical maximum of $\log 3$. To see this, consider more generally the first two terms of the Shannon entropy resulting from the component-wise product of two distributions $(x, 1-x)$ and $(y, 1-y)$:
\[ E_0 = xy \log{xy} + (1-x)(1-y) \log{(1-x)(1-y)}.\]
$E_0$ is maximal when $x = 1/2 = y$, but more generally maximal when $x = y$ for all $0 \leq x + y = c \leq 1$. Combining this with the third term of the entropy corresponding to $1 - x^2 - (1-x)^2 = 2x(1-x)$, gives
\[ E = x^2 \log{x^2} + (1-x)^2 \log{(1-x)^2} + 2x(1-x) \log 2x(1-x),\]
which has a maximum of $3/2 \log 2$ when $x = 1/2$. This corresponds to the distribution $(1/4, 1/4, 1/2)$ as seen earlier in the text, and bounds the entropy rate of \ref{incentive_process}. The same argument applies to an arbitrary mutation regime: the transitions are the product of the distributions $(\frac{N-i}{N}, \frac{i}{N})$ and 
$ \left( \frac{\incentive_A(i) (1 - \mu_{AB}(i)) + \incentive_B(i) \mu_{BA}(N-i)}{\incentive_A + \incentive_B}, \frac{\incentive_A(i) \mu_{AB}(i) + \incentive_B(i) (1 - \mu_{BA}(N-i))}{\incentive_A + \incentive_B}\right)$.
\end{proof}

\bibliography{ref}
\bibliographystyle{unsrt}

\end{document}